\def\RR{{\mathbb R}}
\def\ZZ{{\mathbb Z}}
\def\Eff{\operatorname{Eff}}
\def\Prin{\operatorname{Prin}}
\newcommand{\Div}{\operatorname{Div}\nolimits}
\DeclareMathOperator{\divisor}{div}
\DeclareMathOperator{\ord}{ord}
\DeclareMathOperator{\Jac}{Jac}
\DeclareMathOperator{\Pic}{Pic}
\numberwithin{equation}{section}
\newtheorem{thm}[equation]{Theorem}
\newtheorem{prop}[equation]{Proposition}
\newtheorem{lem}[equation]{Lemma}
\newtheorem{cor}[equation]{Corollary}
\theoremstyle{definition}
\newtheorem{defn}[equation]{Definition}
\newtheorem*{defn*}{Definition}
\newtheorem{ex}[equation]{Example}
\theoremstyle{remark}
\newtheorem{rem}[equation]{Remark}
\begin{document}

\title[Note on Brill-Noether theory for metric graphs]{A note on Brill-Noether theory and rank determining sets for metric graphs}
\author{Chang Mou Lim}
\author{Sam Payne}
\address{Department of Mathematics, Yale University, 10 Hillhouse Ave, New Haven CT, 06511 \ \ \tt changmou.lim@yale.edu \ \ sam.payne@yale.edu \ \ natasha.potashnik@yale.edu}
\author{Natasha Potashnik}

\begin{abstract}
We produce open subsets of the moduli space of metric graphs without separating edges where the dimensions of Brill-Noether loci are larger than the corresponding Brill-Noether numbers.  These graphs also have minimal rank-determining sets that are larger than expected, giving couterexamples to a conjecture of Luo.  Furthermore, limits of these graphs have Brill-Noether loci of the expected dimension, so dimensions of Brill-Noether loci of metric graphs do not vary upper semicontinuously in families.

Motivated by these examples, we study a notion of rank for the Brill-Noether locus of a metric graph, closely analogous to the Baker-Norine definition of the rank of a divisor.  We show that ranks of Brill-Noether loci vary upper semicontinuously in families of metric graphs and are related to dimensions of Brill-Noether loci of algebraic curves by a specialization inequality.
\end{abstract}

\maketitle

\section{Introduction}

Baker and Norine defined linear equivalence and ranks of divisors on graphs and showed that they have many properties similar to those of linear equivalence and dimensions of complete linear series on algebraic curves.  In particular, they satisfy analogues of the Riemann-Roch and Abel-Jacobi Theorems \cite{BakerNorine07}.  These definitions and results extend to metric graphs \cite{GathmannKerber08, MikhalkinZharkov08} and the rank of a divisor on an ordinary graph is equal to its rank on the associated metric graph in which all edges have length 1 \cite{HladkyKralNorine10}.  We follow the usual convention from tropical geometry by referring to the first Betti number of a graph as its genus.

Let $\Gamma$ be a metric graph of genus $g$.  The group $\Pic_0(\Gamma)$ is a real torus of dimension $g$ parametrizing equivalence classes of divisors of degree zero on $\Gamma$, and $\Pic_d(\Gamma)$ is the torsor over $\Pic_0(\Gamma)$ that parametrizes equivalence classes of divisors of degree $d$.   The \textbf{Brill-Noether locus} 
\[
W^r_d(\Gamma) \subset \Pic_d(\Gamma)
\]
is the subset parametrizing divisor classes of rank at least $r$.  This is a closed polyhedral subset of $\Pic_d(\Gamma)$; see Section~\ref{sec:BNloci}.  We consider the dimension of $W^r_d$ as a function on the moduli space of metric graphs of genus $g$, as studied by Culler and Vogtmann \cite{CullerVogtmann86}.  This corresponds to the open subset of the moduli space of tropical curves studied in \cite{BrannettiMeloViviani11} where all vertices are marked with zero.  See also \cite{GathmannKerberMarkwig09, Kozlov09, Caporaso11, Chan11} for other approaches to moduli of tropical curves and metric graphs.

\begin{thm}  \label{thm:notsemicont}
The function taking a metric graph $\Gamma$ to $\dim W^r_d(\Gamma)$ is not upper semicontinuous on the moduli space of metric graphs of genus 4.
\end{thm}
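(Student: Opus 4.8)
The plan is to exhibit an explicit one-parameter family of genus-4 metric graphs over which $\dim W^r_d$ jumps down in the limit, so the function is not upper semicontinuous. The natural candidate for $(r,d)$ is $(1,3)$, since a general genus-4 curve has a finite set of $g^1_3$'s (the Brill-Noether number is $\rho = g - (r+1)(g-d+r) = 4 - 2\cdot 2 = 0$), so on a "generic" metric graph we expect $\dim W^1_3 = 0$, while we want to build special graphs where $\dim W^1_3 \geq 1$. So I would look for a family $\{\Gamma_t\}_{t \in (0,\epsilon)}$ of genus-4 metric graphs, with no separating edges, such that $\dim W^1_3(\Gamma_t) \geq 1$ for all $t > 0$, but $\dim W^1_3(\Gamma_0) = 0$ for the limiting graph $\Gamma_0$.

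**The construction** I would try is based on a graph that looks like two cycles (or a theta graph and an extra loop arrangement) joined so that the genus is $4$ --- for instance, a "chain of loops" $C_1, C_2, C_3, C_4$ connected in a cycle, or a graph built from two copies of a genus-$2$ piece. The key mechanism: in the interior of the family, some edge length $\ell(t)$ is in a regime that forces the existence of a positive-dimensional family of special divisors (because a certain tropical linear series can be "slid along" an edge), whereas as $\ell(t) \to 0$ that edge contracts and the genus-$4$ graph degenerates to one whose $W^1_3$ is a single point (or at least lower-dimensional). Concretely, I would compute divisor classes of rank $\geq 1$ and degree $3$ using Dhar's burning algorithm / the theory of reduced divisors: fix a vertex $v$, and for each divisor class check whether the $v$-reduced representative has positive coefficient at $v$ after chip-firing from every effective divisor of degree $3$. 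The dimension of $W^1_3$ is then read off as the dimension of the cell in $\Pic_3(\Gamma)$ swept out by these classes. The metric conditions on edge lengths that make this dimension drop would be recorded explicitly.

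**The main obstacle** I expect is twofold. First, one must verify the genus-$4$ graphs in the family have \emph{no separating edges} --- this is needed because the abstract (and later) discussion emphasizes avoiding separating edges (a separating edge makes Brill-Noether theory degenerate), and it rules out the cheapest constructions. Second, and harder, is proving the \emph{exact} value $\dim W^1_3(\Gamma_0) = 0$ at the limit: showing a Brill-Noether locus is positive-dimensional only requires exhibiting a family of divisors, but showing it is zero-dimensional requires a complete enumeration of all rank-$\geq 1$ degree-$3$ classes on $\Gamma_0$, i.e.\ ruling out every hypothetical one-parameter family. I would handle this by a careful case analysis on the support of $v$-reduced divisors combined with the Tropical Riemann-Roch Theorem (to bound ranks from genus and degree) and possibly Luo's theory of rank-determining sets applied to $\Gamma_0$.

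**Putting it together**, the proof would proceed: (1) define the family $\Gamma_t$ by a picture plus edge-length functions; (2) check $b_1 = 4$ and no separating edges for all $t \geq 0$; (3) construct an explicit segment of rank-$1$ degree-$3$ divisor classes on $\Gamma_t$ for $t > 0$, giving $\dim W^1_3(\Gamma_t) \geq 1$; (4) enumerate rank-$1$ degree-$3$ classes on $\Gamma_0$ via reduced divisors to get $\dim W^1_3(\Gamma_0) = 0$; (5) conclude that $\dim W^r_d$ fails upper semicontinuity at $\Gamma_0$ with $(r,d) = (1,3)$, $g = 4$. The later sections of the paper presumably reuse the same graphs $\Gamma_t$ to produce the oversized minimal rank-determining sets and the counterexample to Luo's conjecture, so I would set up the construction with enough flexibility to serve all three purposes.
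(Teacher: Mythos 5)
Your plan is essentially the paper's proof: the paper takes $(r,d)=(1,3)$ on genus-4 ``loops of loops'' $\Gamma_t$ (single edges of lengths $t\ell_1, t\ell_2, t\ell_3$ with $\ell_1>\ell_2>\ell_3$ and $\ell_2+\ell_3>\ell_1$), exhibits by explicit subgraph firings a segment of rank-1 degree-3 classes for $t>0$, and shows that the degenerate limit $\Gamma_0$ (three vertices joined pairwise by double edges) has $W^1_3(\Gamma_0)=\{[v_1+v_2+v_3]\}$ using reduced divisors and the rank-determining set $\{v_1,v_2,v_3\}$ --- exactly the mechanism, tools, and two-sided verification you describe. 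The only differences are that you leave the specific graph and edge-length inequalities unspecified, which is precisely where the paper's remaining work lies.
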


\noindent  This differs from the analogous situation in algebraic geometry.  There is a universal Brill-Noether locus $W^r_d$ in the universal Jacobian, proper over the moduli space $M_g$ of smooth projective algebraic curves of genus $g$ \cite[Chapter~4]{ACGH} so, by general results on fiber dimension \cite[Theorem~13.1.3]{EGA4.3}, the function $\dim W^r_d$ is upper semicontinuous in the Zariski topology.  In particular, there is a dense open subset of $M_g$ where $\dim W^r_d$ achieves its minimum.  In the interesting cases, where this minimum is nonnegative but less than $g$, this minimum is the Brill-Noether number
\[
\rho(g,r,d) = g-(r+1)(g-d+r).
\]

There are open sets in the moduli space of metric graphs of genus $g$ where $\dim W^r_d$ is strictly larger than $\rho(g,r,d)$ for trivial reasons;  for instance, the locus of graphs constructed by taking a trivalent tree with $g$ leaves and attaching a loop to each leaf is open, and all such graphs are hyperelliptic.  However, contractions of separating edges induce isomorphisms on Picard groups and Brill-Noether loci \cite{MikhalkinZharkov08, CaporasoViviani10, BakerFaber11}, and the map taking a metric graph to the graph obtained by contracting all of its separating edges is a strong deformation retract onto the moduli space of metric graphs without separating edges.  This retraction collapses the open sets of hyperelliptic graphs described above onto a set of high codimension.  The relevant question, therefore, is whether the Brill-Noether number is equal to the dimension of the Brill-Noether locus on a dense subset of the moduli space of metric graphs without separating edges.  Still, the answer is negative.

\begin{thm} \label{thm:open}
There is an open subset of the moduli space of metric graphs of genus 4 parametrizing graphs $\Gamma$ without separating edges such that $W^1_3(\Gamma)$ has positive dimension.
\end{thm}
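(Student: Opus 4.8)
The plan is to produce an explicit combinatorial type $G$ — trivalent, of genus $4$, and with no separating edge — together with an open set of edge-length vectors $\ell$ for which the metric graph $\Gamma=(G,\ell)$ carries a one-parameter family of divisor classes of degree $3$ and rank at least $1$; since the cell of a trivalent bridgeless type is open in the moduli space of metric graphs without separating edges, it suffices to establish the phenomenon on a nonempty open subset of such a cell. Two features of the problem guide the construction. First, $\rho(4,1,3)=0$, so the expected dimension of $W^1_3$ is zero and we are genuinely after ``extra'' classes. Second, the graphs we produce \emph{cannot} be hyperelliptic: the tropical hyperelliptic locus in genus $4$ has dimension $2g-1=7$, hence is not open, so — unlike in the classical picture, where $\dim W^1_3\geq 1$ forces a $g^1_2$ — the positive-dimensional family must be read off from the combinatorics of $G$ directly, from linear equivalences that no smooth curve sees.

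This strongly suggests using a multigraph: parallel edges of differing lengths create, via $\operatorname{div}$ of suitable piecewise-linear functions, linear equivalences that are forced by the combinatorial type and therefore survive for \emph{every} metric structure on it. I would search among the finitely many trivalent bridgeless genus-$4$ multigraphs — for instance the $6$-cycle with three alternating edges doubled — for one on which a suitable degree-$3$ family has rank $1$; the claim is only that such a type exists, and the verification below is type-independent in spirit.

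The heart of the matter is to write down and verify the family. For $t$ ranging over a subsegment of one of the parallel edges, I would take $D_t$ to be a degree-$3$ divisor with one chip at $t$ and two further chips (one of them moving with $t$) placed so that $D_t$ is ``balanced'' across the relevant $2$-edge-cut, i.e.\ so that the degree-$1$ boundary data it induces on each side of the cut can be slid onto either endpoint of the cut. Two checks are then needed. First, the classes $[D_t]\in\Pic_3(\Gamma)$ must be pairwise distinct; this is a period computation in $\Pic_0(\Gamma)\cong H_1(\Gamma,\RR)/H_1(\Gamma,\ZZ)$, which reduces to exhibiting a cycle of $G$ — e.g.\ the cycle through the two parallel edges — against which the period of $[D_t]-[D_{t_0}]$ is a nonconstant affine function of $t$, so that $t\mapsto[D_t]$ is nonconstant and $W^1_3(\Gamma)$ has positive dimension. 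Second, $r(D_t)\geq 1$; by Luo's Theorem it suffices to fix a finite rank-determining set $A\subset\Gamma$ and verify, for each $v\in A$, that the $v$-reduced divisor linearly equivalent to $D_t$ has a chip at $v$, which one checks by a reduced-divisor computation (running Dhar's burning algorithm from $v$, starting from a convenient effective representative of $D_t$) — the $2$-edge-cut is exactly what makes the fire stall before consuming the last chip at $v$. Since the burning algorithm reads off only the signs of finitely many affine functions of $(t,\ell)$, once it succeeds at one pair $(t_0,\ell_0)$ it succeeds on a whole chamber containing a subsegment of $t$-values and a full-dimensional set of $\ell$, which is the openness in the statement. (The tropical Riemann-Roch theorem, which makes $W^1_3$ invariant under $D\mapsto K_\Gamma-D$, provides a consistency check: the family $[D_t]$ is accompanied by the residual family $[K_\Gamma-D_t]$.)

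The main obstacle I anticipate is carrying out the two checks \emph{uniformly}: choosing $G$, the family $D_t$, and the rank-determining set $A$ so that the burning algorithm runs identically for every $t$ in a segment and every $\ell$ in an open set — that is, so that no wall of the hyperplane arrangement cut out by the relevant affine functions is crossed — while simultaneously ensuring that the classes $[D_t]$ really do sweep out a positive-dimensional subset of $\Pic_3(\Gamma)$ rather than collapsing to a point. Finding the correct combinatorial type $G$ is itself part of this obstacle. An easier point still to record is that the chosen $G$ has no separating edge and that no perturbation of $\ell$ creates one, which is immediate once $G$ is fixed as a bridgeless type.
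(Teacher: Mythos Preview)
Your plan is the paper's plan: pick a trivalent bridgeless genus-$4$ graph (your ``$6$-cycle with three alternating edges doubled'' is exactly the paper's \emph{loop of loops}), write down a one-parameter family $D_t$ of degree-$3$ divisors, and verify $r(D_t)\geq 1$ against a finite rank-determining set using chip-firing moves. The tools you name --- Luo's rank-determining sets, Dhar's algorithm, and the observation that the relevant inequalities are affine in $(t,\ell)$ so that success at one point implies success on an open chamber --- are precisely the right ones.

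The gap is that you have written the recipe but not cooked anything, and you concede as much (``the main obstacle I anticipate is carrying out the two checks uniformly''). All of the content of the theorem lies in that execution: naming the divisor family, naming the rank-determining set, and exhibiting the sequence of firings. The paper does this explicitly. With vertices $v_1,w_1,v_2,w_2,v_3,w_3$ (single edges $[v_i,w_i]$ of length $\ell_i$, double edges $w_i$--$v_{i+1}$), the open condition is $\ell_1>\ell_2>\ell_3$ and $\ell_2+\ell_3>\ell_1$; the family is $D=v_1+w_3+w$ with $w\in[v_2,w_2]$ at distance $\geq \ell_1-\ell_3$ from $v_2$; the rank-determining set is the six vertices; and four explicit firings of genus-$1$ and genus-$2$ subgraphs bounded by pairs of points in $D$ (or in an intermediate equivalent divisor) produce effective representatives containing each vertex. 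The classes form an embedded interval in $\Pic_3(\Gamma)$ --- your period argument would confirm this, and is a point the paper asserts without elaboration.

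So: correct strategy, same route as the paper, but as it stands your proposal is a description of a proof rather than a proof. What remains is exactly the concrete computation you flagged as the obstacle.
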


\noindent On this open set, the dimension of $W^1_3$ is strictly larger than the the Brill-Noether number $\rho(4,1,3)$, which is zero.

The graphs used in the proof of Theorem~\ref{thm:open} are ``loops of loops" of genus 4.  More generally, we consider \textbf{loops of loops} of genus $g \geq 3$, which are trivalent metric graphs that have $2g-2$ vertices labeled $v_1, \ldots, v_{g-1}, w_1, \ldots, w_{g-1}$, with a single edge joining $v_i$ to $w_i$, two edges joining $w_i$ to $v_{i+1}$, and two edges joining $w_{g-1}$ to $v_1$, as shown.  

\begin{center}
\begin{picture}(200,200)
\put(30,60){\circle{40}}
\put(30,140){\circle{40}}
\put(100,178){\circle{40}}
\put(170,140){\circle{40}}
\put(170,60){\circle{40}}

\qbezier(18,76)(10,100)(18,124)
\qbezier(182,76)(190,100)(182,124)

\qbezier(42,156)(57,172)(80,178)
\qbezier(120,178)(143,172)(158,156)

\qbezier(42,44)(57,26)(80,20)
\qbezier(120,20)(143,26)(158,44)

\put(100,17){\circle*{2}}
\put(90,18){\circle*{2}}
\put(110,18){\circle*{2}}

\put(18,76 ){\circle*{5}}
\put(18,124 ){\circle*{5}}
\put(182,76 ){\circle*{5}}
\put(182,124 ){\circle*{5}}
\put(42,156 ){\circle*{5}}
\put(80,178 ){\circle*{5}}
\put(120,178 ){\circle*{5}}
\put(158,156 ){\circle*{5}}
\put(42,44 ){\circle*{5}}
\put(158,44 ){\circle*{5}}

\put(5,78){$v_1$}
\put(3,121){$w_1$}

\put(35,164){$v_2$}
\put(65,182){$w_2$}

\put(122,182){$v_3$}
\put(155,163){$w_3$}

\put(188, 121){$v_4$}
\put(188,76){$w_4$}

\put(30,30){$w_{g-1}$}
\put(157,32){$v_5$}

\end{picture}
\end{center}

\noindent Our study of ranks of divisors on such graphs depends heavily on Luo's theory of rank determining sets.  On an algebraic curve, every set of $g+1$ distinct points is rank determining, and hence every minimal rank determining set on an algebraic curve has size at most $g+1$.  Luo showed that every metric graph of genus $g$ has a rank determining set of size $g+1$, and conjectured that every minimal rank determining set should have at most this size  \cite[p.~1792]{Luo11}.

\begin{thm}  \label{thm:minimal}
Let $\Gamma$ be a loop of loops of genus $g$.  Then the set of trivalent vertices $\{v_1, w_1, \ldots, v_{g-1}, w_{g-1}\}$ is a minimal rank determining set on $\Gamma$.
\end{thm}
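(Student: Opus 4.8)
The plan is to verify the two halves of the statement separately: first that $S = \{v_1, w_1, \ldots, v_{g-1}, w_{g-1}\}$ is a rank determining set, and then that no proper subset of $S$ has this property, so that $S$ is minimal. For the first half, I would invoke Luo's criterion for a set to be rank determining: a set $A$ is rank determining if and only if every $v$-reduced divisor (for $v \in A$) is ``$A$-adapted'' in the sense that its rank can be computed by the chip-firing game restricted to moves that terminate in $A$; equivalently, one checks that for every effective divisor $D$, if $D - P$ fails to be equivalent to an effective divisor for some point $P \in \Gamma$, then $D - P'$ already fails to be equivalent to an effective divisor for some $P' \in A$. The structure of a loop of loops should make this tractable: each ``loop'' is a cycle built from the segment $v_i w_i$ together with the doubled edges, and a reduced divisor's behavior on each loop is governed by its values at the two trivalent vertices bounding that loop. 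I would analyze reduced divisors cell by cell, showing that the obstruction to $v$-reducedness on any open edge can always be pushed to one of the trivalent vertices.

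For the second, more delicate half, I must show that for each vertex $x \in S$, the set $S \setminus \{x\}$ is \emph{not} rank determining. By symmetry it suffices to treat a representative $v_i$ and a representative $w_i$ (and possibly the boundary cases near $w_{g-1}$–$v_1$). The strategy is to exhibit, for each such $x$, an explicit divisor $D$ on $\Gamma$ and a point $P \in \Gamma$ such that $r_{D}$ computed using only test points in $S$ gives the wrong (too large) answer when $x$ is omitted — that is, $D$ has rank at least $0$ as measured against $S \setminus \{x\}$, but $D - P$ is not equivalent to an effective divisor, witnessed precisely by $P$ lying ``at'' or adjacent to $x$. Concretely I expect to take $D$ of small degree (degree $1$ should suffice, analogous to how $W^1_3$ detects failures in genus $4$) supported near the loop containing $x$, chosen so that its $x$-reduced form has a chip at $x$ but its $y$-reduced form for every other $y \in S$ is positive somewhere else; then $x$ itself is the unique obstruction point, so deleting $x$ from the test set destroys the rank computation.

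The main obstacle will be the minimality argument, and within it the bookkeeping of reduced divisors on a loop of loops: I need tight control over how the $x$-reduced representative of a degree-one class distributes its single chip, and I need to certify that for the witnessing divisor $D$ the only point $P$ with $D - P \not\sim \text{effective}$ is $x$ — ruling out all other trivalent vertices requires checking each loop. I would organize this by first establishing a normal form for reduced divisors of small degree on loops of loops (the chip sits at a trivalent vertex or on a specified edge, with an explicit rule depending on edge lengths), then reading off the obstruction points directly. The genus-$4$ computations underlying Theorem~\ref{thm:open} should serve as the prototype, with the general $g$ case following the same pattern loop by loop. A secondary technical point is handling the asymmetry between $v$-type and $w$-type vertices (a $v_i$ meets one ``thin'' edge and two ``doubled'' edges on one side, a $w_i$ the reverse), but this only doubles the number of cases rather than introducing a genuinely new difficulty.
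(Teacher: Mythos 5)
There is a genuine gap: what you have written is a plan rather than a proof, and its central construction cannot work as stated. For the minimality half you propose, for each $x \in S$, a witness divisor $D$ of degree $1$ whose rank measured against $S \smallsetminus \{x\}$ comes out too large. A degree-one divisor can never serve as such a witness: $r_A(D) \geq 0$ is equivalent to $r(D) \geq 0$ for any $A$ (the test divisor is empty), and $r_{S \smallsetminus \{x\}}(D) \geq 1$ for a degree-one $D$ would force $D \sim y$ for every $y$ in the $(2g-3)$-point set $S \smallsetminus \{x\}$, which is impossible on a bridgeless graph of positive genus, where the Abel--Jacobi map is injective and each degree-one class contains at most one effective divisor. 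So witnesses must have larger degree (the paper's explicit $g=4$ example has degree $3$), and actually producing such divisors for every $g$ and every omitted vertex, then certifying $r(D)=0$ while $r_{S\smallsetminus\{x\}}(D)\geq 1$, is exactly the hard part --- which your proposal defers to a ``normal form for reduced divisors'' that is never established. The first half has the same character: the criterion you attribute to Luo (about reduced divisors being ``$A$-adapted'') is not his criterion, and the promised cell-by-cell analysis is not carried out.

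The paper sidesteps all of this by using Luo's topological characterization directly: $A$ is rank determining if no special open set (connected, with every complementary component containing a vertex of outdegree at least $2$) is disjoint from $A$. For $A = \{v_1, w_1, \ldots, v_{g-1}, w_{g-1}\}$ every component of $\Gamma \smallsetminus A$ has closure a tree, so there are no such sets and $A$ is rank determining. For minimality, by Luo's Proposition~3.26 it suffices to exhibit, for each $v \in A$, a special open set meeting $A$ exactly in $\{v\}$; since homeomorphisms of $\Gamma$ act transitively on $A$, one takes $v = v_1$ and the open neighborhood $U$ of $v_1$ bounded by $w_1$ and $w_{g-1}$, which is special and satisfies $U \cap A = \{v_1\}$. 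If you insist on the chip-firing route, you would need explicit witness divisors of degree at least $3$ for arbitrary $g$ (modeled on the paper's $g=4$ example $D = v_1 + w_2 + v_3$, whose $w_3$-reduced form misses $w_3$) together with full reduced-divisor verifications --- substantially more work than the short topological argument, and none of it is present in your proposal.
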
 

\noindent This minimal rank determining set has size $2g-2$, which is greater than $g+1$ when $g$ is at least 4, giving counterexamples to Luo's conjecture.

\begin{rem}
These results for loops of loops of genus 4 are moderated by the observation that any genus 4 curve with a regular semistable model where the dual graph of the special fiber is such a loop of loops is Brill-Noether general.  This is because a genus 4 curve that is not Brill-Noether general must be hyperelliptic, and hence, by Baker's Specialization Lemma \cite{Baker08}, the dual graph of its special fiber must be a hyperelliptic graph.  Such graphs have an involution for which the quotient is a tree \cite{BakerNorine09}, and loops of loops have no such involutions.  The following definition and theorems give a framework in which one can understand and generalize this observation without appealing to special facts about hyperelliptic graphs and curves of low genus.  See Remark~\ref{rem:recover}.
\end{rem}

We propose the following definition of the Brill-Noether rank $w^r_d(\Gamma)$, as a substitute for $\dim W^r_d(\Gamma)$ when the latter is not well-behaved.

\begin{defn}  \label{def:main}
Let $\Gamma$ be a metric graph such that $W^r_d(\Gamma)$ is nonempty.  The \textbf{Brill-Noether rank} $w^r_d(\Gamma)$ is the largest integer $\rho$ such that, for every effective divisor $E$ of degree $r + \rho$, there exists a divisor $D$ of degree $d$ and rank at least $r$ on $\Gamma$ such that $D - E$ is effective.  If $W^r_d(\Gamma)$ is empty then we define $w^r_d(\Gamma)$ to be $-1$.  
\end{defn}

\noindent In many respects, the Brill-Noether rank of a graph is analogous to the dimension of the Brill-Noether locus of an algebraic curve just as the Baker-Norine rank of a divisor on a graph is analogous to the dimension of the complete linear series of a divisor on a curve; see Proposition~\ref{prop:algrank} for the classical side of this analogy.  Like Baker-Norine ranks of divisors, these Brill-Noether ranks vary upper semicontinuously in families and satisfy a specialization inequality.

\begin{thm}  \label{thm:semicont}
The function taking $\Gamma$ to $w^r_d(\Gamma)$ is upper semicontinuous on the moduli space of metric graphs.
\end{thm}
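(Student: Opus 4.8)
The plan is to prove upper semicontinuity by showing that $\{\Gamma : w^r_d(\Gamma)\ge\rho\}$ is closed for every integer $\rho$. For $\rho\le -1$ this set is the whole moduli space, and for $\rho\ge 0$ the condition $w^r_d(\Gamma)\ge\rho$ already forces $W^r_d(\Gamma)$ to be nonempty (any witnessing $D$ has degree $d$ and rank at least $r$), so the convention that $w^r_d=-1$ on the empty locus never intervenes; thus it suffices to treat $\rho\ge 0$. Since closedness is local, I would fix a metric graph $\Gamma_0$ and construct a neighborhood $B$ of $\Gamma_0$ over which $\{w^r_d\ge\rho\}$ is closed. After shrinking $B$ (and, if convenient, passing to the finite cover of it by faces of cells, which does not affect closedness), every $\Gamma\in B$ comes equipped with a contraction $\Gamma\twoheadrightarrow\Gamma_0$ collapsing a subforest of its edges. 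Over $B$ I would work with the family $\Sym^k\mathcal G\to B$ of $k$-fold symmetric products, the family $\Pic_k\mathcal G\to B$ of degree-$k$ Jacobian torsors, and the relative Abel--Jacobi map $\Sym^k\mathcal G\to\Pic_k\mathcal G$. The properties I need of this package are: $\Sym^k\mathcal G\to B$ is proper and separated, its fibers being symmetric products of compact metric graphs; the relative Abel--Jacobi map is continuous; and, crucially, $\Pic_k\mathcal G\to B$ is separated, because crossing a boundary face of a cell of the moduli space collapses only forests and hence never kills a cycle of the underlying graph, so the period lattice of the Jacobian remains a full lattice and varies continuously over all of $B$. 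Granting this (see Section~\ref{sec:BNloci}, or \cite{MikhalkinZharkov08,BakerFaber11}), the relative locus $\mathcal W^0_k\subset\Pic_k\mathcal G$ of classes of effective divisors is closed, being the image of a proper map.

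The device that does the real work is an essentially trivial lifting lemma: if $\Gamma_n\to\Gamma_0$ in $B$ and $E_0$ is an effective divisor of degree $m$ on $\Gamma_0$, then there exist effective divisors $E_n$ of degree $m$ on $\Gamma_n$ with $E_n\to E_0$ in $\Sym^m\mathcal G$ --- simply lift each of the finitely many points in the support of $E_0$ through the surjection $\Gamma_n\twoheadrightarrow\Gamma_0$ to a point of $\Gamma_n$ converging to it. From this I would extract two facts. First, the relative Brill--Noether locus $\mathcal W^r_d:=\{(\Gamma,D)\in\Sym^d\mathcal G: r_\Gamma(D)\ge r\}$ is closed: since $r_\Gamma(D)\ge r$ if and only if $[D-E]\in\mathcal W^0_{d-r}$ for every effective degree-$r$ divisor $E$ on $\Gamma$, the locus $\mathcal W^r_d$ consists of those $(\Gamma,D)$ for which $(\Gamma,D,E)$ lies, for all such $E$, in the closed set $\mathcal R:=\{(\Gamma,D,E): E\ge 0,\ \deg E=r,\ [D-E]\in\mathcal W^0_{d-r}\}$; and if $(\Gamma_n,D_n)\to(\Gamma_0,D_0)$ with all $(\Gamma_n,D_n)\in\mathcal W^r_d$, then for any effective degree-$r$ divisor $E_0$ on $\Gamma_0$ one lifts to $E_n\to E_0$, observes $(\Gamma_n,D_n,E_n)\in\mathcal R$, and passes to the limit, whence $(\Gamma_0,D_0,E_0)\in\mathcal R$; as $E_0$ was arbitrary, $(\Gamma_0,D_0)\in\mathcal W^r_d$. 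Second, the projection $q:\Sym^{r+\rho}\mathcal G\to B$ is open: given a neighborhood $V$ of $(\Gamma_0,E_0)$ and a sequence $\Gamma_n\to\Gamma_0$, the lifts $E_n\to E_0$ eventually satisfy $(\Gamma_n,E_n)\in V$, so $q(V)$ is a neighborhood of $\Gamma_0$.

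With these in hand I would form $\mathcal Z:=\{(\Gamma,D,E)\in\Sym^d\mathcal G\times_B\Sym^{r+\rho}\mathcal G : r_\Gamma(D)\ge r,\ D\ge E\}$. The condition $D\ge E$ is closed, being the image of the proper relative addition map $(F,E)\mapsto(F+E,E)$ out of $\Sym^{d-r-\rho}\mathcal G\times_B\Sym^{r+\rho}\mathcal G$, so $\mathcal Z$ is closed by the first fact above, hence proper over $B$; consequently its image $C$ under $\mathcal Z\to\Sym^{r+\rho}\mathcal G$ is closed. By Definition~\ref{def:main}, for $\Gamma\in B$ one has $w^r_d(\Gamma)\ge\rho$ exactly when $q^{-1}(\Gamma)\subset C$, that is, exactly when $\Gamma$ lies outside $q(\Sym^{r+\rho}\mathcal G\setminus C)$; by the second fact this last set is open in $B$, so $\{\Gamma\in B: w^r_d(\Gamma)\ge\rho\}$ is closed in $B$, completing the local statement. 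Note that this argument makes no distinction between $\Gamma_0$ interior to its cell and $\Gamma_0$ on a face, so it simultaneously yields upper semicontinuity within each stratum and across strata.

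The logical skeleton --- clearing the ``for every $E$'' and ``there exists $D$'' quantifiers in Definition~\ref{def:main} by playing properness against the lifting lemma --- is formal, and I expect the only real obstacle to be the geometric input about the families over $B$, above all that the Jacobians form a \emph{separated} family there. This is where the boundary structure of the moduli space enters: one must know that degenerating a metric graph within it collapses only forests of edges, never a whole cycle, so that the Jacobian changes neither its dimension nor its nondegeneracy, and that the relative Abel--Jacobi map extends continuously over such degenerations. Once this, together with the properness of the symmetric products, is established in Section~\ref{sec:BNloci}, the proof is complete.
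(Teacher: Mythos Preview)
Your argument is correct and follows the same architecture as the paper's: work over a universal family, show the universal Brill--Noether locus is closed, show the projection from the space of degree-$(r+\rho)$ effective divisors to the base is open, and conclude that $\{w^r_d\ge\rho\}$ is the complement of the image of an open set under an open map.  The one substantive difference is in how closedness of the universal locus $\mathcal W^r_d$ is obtained.  The paper invokes Luo's theorem that the vertex set $\{v_1,\ldots,v_m\}$ is rank determining on every fiber $\Gamma_\ell$; this reduces $W^r_d(\Gamma^*)$ to a \emph{finite} intersection $\bigcap_a S_a$ of translates of $\Eff_{d-r}(\Gamma^*)$, so closedness is immediate once the intersection is written down.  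You instead clear the universal quantifier over $E$ directly, via your lifting lemma and a sequential limit through the closed set $\mathcal R$.  Your route is more self-contained---it does not rely on the existence of finite rank-determining sets---while the paper's is a bit slicker and yields a more explicit polyhedral description.  The paper also phrases everything inside $\Eff_d(\Gamma^*)\subset\Pic_d(\Gamma^*)$ rather than in $\Sym^d\mathcal G$, but this is cosmetic: one is the image of the other under Abel--Jacobi, and your properness and openness arguments go through identically in either setting.
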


\begin{thm}  \label{thm:specialization}
Let $X$ be a smooth projective curve over a discretely valued field with a regular semistable model whose special fiber has dual graph $\Gamma$.  Then 
\[
\dim W^r_d(X) \leq w^r_d(\Gamma).
\]
\end{thm}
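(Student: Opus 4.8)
The plan is to deduce the inequality from its classical counterpart, Proposition~\ref{prop:algrank}, by transporting the latter divisor by divisor down to $\Gamma$ through specialization. Set $\rho=\dim W^r_d(X)$. If $W^r_d(X)$ is empty the inequality is trivial, so assume it is nonempty and $\rho\ge 0$; specializing any divisor of degree $d$ and rank at least $r$ on $X$ shows $W^r_d(\Gamma)$ is nonempty too, so by definition $w^r_d(\Gamma)$ is the largest $\rho'$ such that every effective divisor of degree $r+\rho'$ on $\Gamma$ is dominated by a divisor of degree $d$ and rank at least $r$. It therefore suffices to check that $\rho$ itself has this property. By Proposition~\ref{prop:algrank}, the corresponding statement holds on $X$: for every effective divisor $\widetilde E$ of degree $r+\rho$ on $X$ (over the algebraic closure) there is a divisor $\widetilde D$ of degree $d$ and rank at least $r$ with $\widetilde D-\widetilde E$ effective; in particular $d\ge r+\rho$.

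Next I would reduce to the case in which $E$ is supported on the rational points of $\Gamma$ --- those realized as specializations of algebraic points of $X$ after a finite base extension --- which form a dense subset. The point is that the set
\[
S=\{\,E\in\Sym^{r+\rho}\Gamma:\ \exists\,F\in\Sym^{d-r-\rho}\Gamma\ \text{with}\ [E+F]\in W^r_d(\Gamma)\,\}
\]
of effective divisors of degree $r+\rho$ dominated by a divisor of degree $d$ and rank at least $r$ is closed. Indeed, the preimage of the closed set $W^r_d(\Gamma)\subset\Pic_d(\Gamma)$ under the continuous map $\Sym^d\Gamma\to\Pic_d(\Gamma)$ is closed; its preimage under the addition map $\Sym^{r+\rho}\Gamma\times\Sym^{d-r-\rho}\Gamma\to\Sym^d\Gamma$ is closed; and the image of that preimage under the projection $\Sym^{r+\rho}\Gamma\times\Sym^{d-r-\rho}\Gamma\to\Sym^{r+\rho}\Gamma$ is closed because $\Sym^{d-r-\rho}\Gamma$ is compact. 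Since effective divisors of degree $r+\rho$ supported on rational points are dense in $\Sym^{r+\rho}\Gamma$, it is enough to show each such $E$ lies in $S$.

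So fix $E=q_1+\cdots+q_{r+\rho}$ with each $q_i$ rational. After a finite base extension, $X$ has a regular semistable model whose special fiber has dual graph a subdivision of $\Gamma$ (up to an overall rescaling of the metric, which changes none of the quantities in the statement) in which each $q_i$ is a vertex; this replaces $X$ by a curve with the same $\dim W^r_d$ and $\Gamma$ by a metric graph with the same $\Pic_d$, ranks of divisors, and $W^r_d$, hence the same $w^r_d$. Thus we may assume each $q_i$ equals the specialization $\tau(\widetilde q_i)$ of a point $\widetilde q_i$ of $X$. Put $\widetilde E=\widetilde q_1+\cdots+\widetilde q_{r+\rho}$, so $\tau(\widetilde E)=E$. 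By Proposition~\ref{prop:algrank} there is a divisor $\widetilde D$ of degree $d$ and rank at least $r$ with $\widetilde D-\widetilde E$ effective. Then $D:=\tau(\widetilde D)$ has degree $d$; by the Specialization Lemma \cite{Baker08} its rank is at least that of $\widetilde D$, hence at least $r$; and $D-E=\tau(\widetilde D-\widetilde E)$ is effective because $\tau$ carries effective divisors to effective divisors. Thus $E\in S$, and the theorem follows.

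The step I expect to be the main obstacle is this last passage from $X$ down to $\Gamma$: one needs that every rational point of $\Gamma$ is hit by a section of a regular semistable model after a suitable ramified base change, and that the Specialization Lemma --- with its rank inequality and its compatibility with degree and effectivity --- applies to the base-changed curve, whose reduction graph is $\Gamma$ up to subdivision and rescaling. This is precisely why the reduction to rational $E$ in the second paragraph is needed, since an arbitrary real effective divisor on $\Gamma$ cannot be lifted to $X$ directly; the closedness of $S$, which rests on $W^r_d(\Gamma)$ being closed and on the compactness of symmetric products of $\Gamma$, is what makes the rational case suffice.
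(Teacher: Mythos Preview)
Your proposal is correct and follows essentially the same approach as the paper: reduce to rational effective divisors $E$ on $\Gamma$, lift these to $X$, apply Proposition~\ref{prop:algrank} there, and push back down via Baker's Specialization Lemma; then handle arbitrary $E$ by a density/compactness argument using the closedness of $W^r_d(\Gamma)$. Your formulation of the last step via the closed set $S\subset\Sym^{r+\rho}\Gamma$ is a clean repackaging of the paper's sequential-compactness argument, and your remarks on base extension make explicit what the paper leaves implicit, but the two proofs are the same in substance.
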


\noindent Since every graph with integer edge lengths is the dual graph of the special fiber of such a model, and since every metric graph is a limit of dilations of graphs with integer edge lengths, it follows that the Brill-Noether ranks of arbitrary metric graphs are essentially bounded below by the corresponding Brill-Noether numbers.

\begin{cor}
Let $\Gamma$ be a metric graph of genus $g$.  Then $w^r_d(\Gamma) \geq \min \{ \rho(g,r,d), g \}$.
\end{cor}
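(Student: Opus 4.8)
The plan is to assemble Theorem~\ref{thm:specialization}, Theorem~\ref{thm:semicont}, and the classical existence theorem in Brill--Noether theory, after reducing from arbitrary metric graphs to those with integer edge lengths. Write $\rho = \rho(g,r,d)$. The case $\rho < 0$ is immediate: then $\min\{\rho, g\} = \rho \le -1$, whereas $w^r_d(\Gamma) \ge -1$ always, since $w^r_d(\Gamma) = -1$ when $W^r_d(\Gamma)$ is empty and $w^r_d(\Gamma) \ge 0$ otherwise, as any divisor class of rank at least $r$ shows that $\rho = 0$ satisfies the condition of Definition~\ref{def:main} (if $D_0$ has rank $\ge r$ and $E$ is effective of degree $r$, then $D_0 - E$ is equivalent to some effective $E'$, and $D := E' + E \sim D_0$ has degree $d$, rank $\ge r$, and $D - E$ effective). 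So I would assume $\rho \ge 0$ henceforth.

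Next I would reduce to integer edge lengths. The key observation is that $w^r_d$ is invariant under uniform dilation of a metric graph: scaling all edge lengths by a fixed positive real number induces an isomorphism of Picard groups preserving degrees, effectivity, and ranks of divisors, hence preserving $W^r_d$ and the condition in Definition~\ref{def:main}. Any metric graph with rational edge lengths is a dilation of a metric graph with integer edge lengths (clear denominators), and an arbitrary metric graph is a limit, within a fixed combinatorial type, of metric graphs with rational edge lengths. Since $w^r_d$ is upper semicontinuous by Theorem~\ref{thm:semicont}, and the value at a limit point is at least the limiting value along any approaching sequence, it suffices to prove $w^r_d(\Gamma) \ge \min\{\rho, g\}$ when $\Gamma$ has integer edge lengths.

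In that case, by the observation recalled before the corollary there is a smooth projective curve $X$ over a discretely valued field with a regular semistable model whose special fiber has dual graph $\Gamma$, so Theorem~\ref{thm:specialization} gives $w^r_d(\Gamma) \ge \dim W^r_d(X)$, and it remains to bound $\dim W^r_d(X)$ from below. If $\rho \le g$, this is the classical existence theorem of Kempf and Kleiman--Laksov (\cite[Chapter~V]{ACGH}): $W^r_d(X)$ is nonempty and every irreducible component has dimension at least $\rho$. If instead $\rho > g$, then $(r+1)(g-d+r) < 0$, which forces $d \ge g+r$; by Riemann--Roch every divisor of degree $d$ on $X$ then has $h^0 \ge d-g+1 \ge r+1$, so $W^r_d(X) = \Pic_d(X)$ has dimension $g = \min\{\rho, g\}$. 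Either way $\dim W^r_d(X) \ge \min\{\rho, g\}$, which completes the argument. I do not expect a genuine obstacle here, as this really is a corollary; the only points needing care are the dilation-invariance of $w^r_d$ and checking that the limiting step invokes upper semicontinuity in the correct direction.
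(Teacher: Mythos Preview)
Your proof is correct and follows exactly the route the paper indicates in the sentence preceding the corollary: reduce to integer edge lengths via dilation invariance and upper semicontinuity (Theorem~\ref{thm:semicont}), then apply Theorem~\ref{thm:specialization} together with the classical existence theorem for $W^r_d(X)$. The paper leaves this as an implicit corollary, so your write-up simply spells out the details the paper omits, including the trivial case $\rho<0$ and the Riemann--Roch argument when $\rho>g$.
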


\noindent The paper concludes with a proof that the Brill-Noether rank takes the expected value for loops of loops in the case $(g,r,d) = (4,1,3)$.

\begin{thm}  \label{thm:w13}
Let $\Gamma$ be a loop of loops of genus 4.  Then $w^1_3(\Gamma) = 0$.
\end{thm}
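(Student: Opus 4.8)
\noindent The lower bound $w^1_3(\Gamma)\ge 0$ is immediate from the Corollary above, since $\rho(4,1,3)=0$ and $g=4$ give $w^1_3(\Gamma)\ge\min\{0,4\}=0$; in particular $W^1_3(\Gamma)\neq\varnothing$. It remains to prove $w^1_3(\Gamma)\le 0$. The plan is to first recast this as the existence of a base-point-free divisor of degree $4$ and rank $1$ on $\Gamma$, and then to exhibit one.

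For the recasting, unwind Definition~\ref{def:main}: since any divisor $D$ of rank $\ge 1$ dominating an effective divisor $E$ of degree $2$ must have degree $3=\deg E+1$, the inequality $w^1_3(\Gamma)\ge 1$ says exactly that for every effective divisor $p+q$ of degree $2$ there is a point $t$ with $r(p+q+t)\ge 1$. By the tropical Riemann--Roch theorem, $r(p+q+t)=r(K-p-q-t)$ where $K=v_1+w_1+\dots+v_{g-1}+w_{g-1}$ is the canonical divisor, and likewise $r(K-p-q)=r(p+q)+1$, which equals $1$ because a loop of loops carries no effective divisor of degree $2$ and positive rank---it is not hyperelliptic (see the remark after Theorem~\ref{thm:minimal}). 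Hence every $K-p-q$ has degree $4$ and rank $1$; conversely any divisor $F$ of degree $4$ and rank $1$ satisfies $r(K-F)=r(F)-1=0$, so $K-F$ is equivalent to an effective divisor of degree $2$, and therefore $\{K-p-q\}$ sweeps out all divisor classes of degree $4$ and rank $1$. Finally, since $r(F)=1$ we have $r(F-t)\in\{0,1\}$, so a point $t$ satisfies $r(F-t)\ge 1$ precisely when it is a base point of $|F|$; hence such a $t$ exists precisely when $|F|$ has a base point. Combining, $w^1_3(\Gamma)\ge 1$ if and only if every divisor of degree $4$ and rank $1$ on $\Gamma$ has a base point; equivalently, $w^1_3(\Gamma)=0$ if and only if $\Gamma$ carries a base-point-free divisor of degree $4$ and rank $1$---equivalently, if and only if some effective divisor of degree $2$ is dominated by no divisor of degree $3$ and rank $\ge 1$.

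It then remains to produce such a divisor. I would search among the divisors $K-p-q$ with $p,q$ among the distinguished vertices $v_i,w_i$; this choice is delicate, since not every such pair works---for instance $K-v_1-w_2=w_1+v_2+v_3+w_3$ has $w_3$ in its base locus, because $w_1+v_2+v_3$ is itself a $g^1_3$---so the right pair must be selected using an explicit description of $W^1_3(\Gamma)$. Two tools govern everything here: Luo's rank-determining set $\{v_1,w_1,\dots,v_{g-1},w_{g-1}\}$ from Theorem~\ref{thm:minimal}, which lets one test rank $\ge 1$ by checking only these $2g-2$ vertices, and Dhar's burning algorithm, run from each of these vertices to compute reduced divisors on the necklace; one also knows a priori that every $g^1_3$ on $\Gamma$ is base-point-free, since a base point would produce a $g^1_2$ and $\Gamma$ is not hyperelliptic. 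The main obstacle is exactly these reduced-divisor computations on the necklace, carried out uniformly in the edge lengths: first to pin down $W^1_3(\Gamma)$, and then, for the chosen divisor $F$, to verify base-point-freeness by exhibiting, for every $t\in\Gamma$, some vertex $x$ at which the $x$-reduced representative of $F-t$ vanishes---a finite case analysis as $t$ sweeps across the nine edges. Once the correct $F$ is identified these computations should be routine, if somewhat lengthy.
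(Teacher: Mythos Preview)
Your reformulation is correct but circuitous, and the proof itself is not carried out. Unwinding Definition~\ref{def:main} directly already gives what you arrive at after the Riemann--Roch detour: $w^1_3(\Gamma)\le 0$ means precisely that there exists an effective divisor $E$ of degree $2$ contained in no effective divisor of degree $3$ and rank $\ge 1$. There is no need to pass through degree-$4$ divisors, base loci, or the non-hyperellipticity of $\Gamma$ to reach this statement; it is the negation of ``$w^1_3(\Gamma)\ge 1$'' straight from the definition. (Your base-point equivalence is also a bit delicate tropically, though it is not needed.)

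The genuine gap is that you stop at the plan. You say you would search among $K-p-q$, anticipate a lengthy nine-edge case analysis uniform in the edge lengths, and leave it there. In fact the computation is short once you make the right normalization and choice. The paper orders the single-edge lengths so that $\ell_1\ge\ell_2,\ell_3$ and takes $E=v_1+w_1$. Then for $D=v_1+w_1+w$ one checks $r(D)=0$ in just three cases: if $w$ lies on $[v_1,w_1]$ or in the interior of one of the loops, $D$ is already $v_2$-reduced and misses $v_2$; if $w\in[v_2,w_2]$, firing the genus-$1$ subgraph bounded by $w_1$ and $w$ (using $\ell_1\ge\ell_2$) produces a $v_3$-reduced divisor missing $v_3$; and symmetrically for $w\in[v_3,w_3]$. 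The assumption that $[v_1,w_1]$ is the longest bridge is exactly what makes these firings land where they should, and is the ``delicate'' choice you were looking for---no prior description of $W^1_3(\Gamma)$ is required.
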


\begin{rem}  \label{rem:recover}
Together, Theorems~\ref{thm:specialization} and  \ref{thm:w13} recover the fact that, if $X$ is a smooth projective curve over a discretely valued field with a regular semistable model whose special fiber has dual graph $\Gamma$, then $X$ has only finitely many divisor classes of degree 3 and rank 1.
\end{rem}

\begin{rem}
The failure of upper semicontinuity in Theorem~\ref{thm:notsemicont} also has an analogue for linear series.  The complete linear series of a divisor on a metric graph also has a natural polyhedral structure and hence a well-defined dimension \cite{HMY09}, but these dimensions do not vary upper semicontinuously in families.  See Example~\ref{ex:yu}.
\end{rem}

\begin{rem}
The specialization inequality stated here as Theorem~\ref{thm:specialization} is a close analogue of Baker's Specialization Lemma \cite[Section~2]{Baker08}, and the basic idea appeared already in \cite{tropicalBN}, where it was used to deduce the classical Brill-Noether Theorem from a ``tropical Brill-Noether Theorem".  The tropical Brill-Noether Theorem proved there gives an explicit graph $\Gamma$ for each genus $g$ with the following properties.
\begin{enumerate}
\item  If $\rho(g,r,d)$ is negative then $\Gamma$ has no divisors of degree $d$ and rank $r$.
\item  If $\rho(g,r,d)$ is zero then $\Gamma$ has exactly the expected number of distinct divisor classes of degree $d$ and rank $r$.
\item  If $\rho(g,r,d)$ is nonnegative then $\dim W^r_d(\Gamma) = \min \{\rho(g,r,d), g\}$ and $w^r_d(\Gamma) = \rho(g,r,d)$.
\end{enumerate}
These graphs are chains of $g$ loops, joined by separating vertices of valence 4.  Since the Brill-Noether theory of a graph depends only on the Jacobian and the image of the Abel-Jacobi map, and since contractions of separating edges leave these unchanged, the same results hold on trivalent chains of loops where the 4-valent separating vertices are replaced by separating edges.  Such graphs correspond to an open subset of the moduli space of metric graphs, and some have speculated that properties (1)-(3) should furthermore hold on a dense open subset of the moduli of metric graphs, just as the analogous properties hold on a dense open subset of the moduli space of curves.  Theorem~\ref{thm:open} shows that this is not the case.

Theorems~\ref{thm:open}, \ref{thm:semicont}, and \ref{thm:w13} do show that the locus where the Brill-Noether rank $w^r_d$ is equal to the Brill-Noether number $\rho(g,r,d)$ is open and strictly larger than the locus where (1)-(3) hold, but it remains unclear whether this locus is dense in the moduli space of graphs without separating edges.  See also \cite[Conjecture~6.6]{Caporaso11b} for an interesting recent conjecture on the locus of graphs with Brill-Noether general properties.
\end{rem}

\noindent \textbf{Acknowldegments.}  We thank L.~Caporaso and Y.~Luo for helpful comments on an earlier draft of this paper and J. Yu for providing Example~\ref{ex:yu}.

\section{Preliminaries}

We briefly recall the basic facts about divisors and Abel-Jacobi theory for metric graphs, following \cite{Mikhalkin06, BakerNorine07, GathmannKerber08, MikhalkinZharkov08, BakerFaber11}, to which we refer the reader for proofs, references, and further details.

\subsection{Divisors}  Let $\Gamma$ be a compact connected metric graph.  A \textbf{divisor} on $\Gamma$ is a finite $\ZZ$-linear combination of points of $\Gamma$, and we write $\Div(\Gamma)$ for the additive group of all such divisors.  The \textbf{degree} of a divisor $D = a_1 v_1 + \cdots + a_r v_r$ is the sum of its coefficients $a_1 + \cdots + a_r$.

Let $f$ be a piecewise linear function with integer slopes on $\Gamma$.  For each point $v$ in $\Gamma$, the sum of the incoming slopes of $f$ at $v$ is denoted $\ord_v(f)$.  This sum is zero for all but finitely many points of $\Gamma$, so
\[
\divisor(f) = \sum_{v \in \Gamma} \ord_v(f) \cdot v
\]
is a divisor.  The divisors of piecewise linear functions with integer slopes are called \textbf{principal} and we write $\Prin(\Gamma)$ for the subgroup of all principal divisors on $\Gamma$.  The quotient
\[
\Pic(\Gamma) = \Div(\Gamma) / \Prin(\Gamma),
\]
is called the \textbf{Picard group} of $\Gamma$ and elements of $\Pic(\Gamma)$ are called \textbf{divisor classes}.  Every principal divisor has degree zero, so the degree of a divisor class is well-defined.  We write $\Pic_d(\Gamma)$ for the subset of divisor classes of degree $d$.  In particular, $\Pic_0(\Gamma)$ is the subgroup of divisor classes of degree zero.

\subsection{Abel-Jacobi theory}  Abel-Jacobi theory for metric graphs identifies $\Pic_0(\Gamma)$ with the \textbf{Jacobian torus}
\[
\Jac(\Gamma) = \Omega^*(\Gamma) / H_1(\Gamma,\ZZ),
\]
where $\Omega^*(\Gamma)$ is the dual vector space of the real vector space of \textbf{harmonic 1-forms} on $\Gamma$, which assign a real-valued slope to each edge in $\Gamma$ in such a way that the sum of the incoming slopes is zero at every vertex.  As in the classical Abel-Jacobi theory of algebraic curves, the homology group $H_1(\Gamma, \ZZ)$ embeds as a lattice in $\Omega^*(\Gamma)$ through integration of 1-forms along 1-cycles.

For each integer $d$, the subset $\Pic_d(\Gamma)$ of divisor classes of degree $d$ is a torsor over the real torus $\Pic_0(\Gamma)$.  The \textbf{Abel-Jacobi map}
\[
\Phi : \Gamma \rightarrow \Pic_1(\Gamma)
\]
takes a point $v$ to the divisor class $[v]$.  It contracts all separating edges of $\Gamma$ and maps the resulting graph without separating edges homeomorphically onto its image.  Furthermore, this map is piecewise linear in the appropriate sense.  If we fix a basepoint then $\Pic_1(\Gamma)$ is identified with $\Pic_0(\Gamma)$ and $\Phi$ is identified with the map
\[
\Phi_w : \Gamma \rightarrow \Pic_0(\Gamma)
\]
taking a point $v$ to $[v-w]$.  Then $\Omega^*(\Gamma)$ is identified with the universal cover of $\Pic_1(\Gamma)$ and the restriction of the Abel-Jacobi map to any contractible subset $U$ factors through a piecewise-linear map to the real vector space $\Omega^*(\Gamma)$.

\subsection{Linear series} 

Let $D = a_1 v_1 + \cdots  + a_r v_r$  be a divisor on $\Gamma$.  Then $D$ is called \textbf{effective} if each of its coefficients $a_1, \ldots, a_r$ is nonnegative.  A divisor $D'$ on $\Gamma$ is \textbf{equivalent} to $D$ if $D - D'$ is a principal divisor, which exactly means that the divisor classes $[D]$ and $[D']$ are equal in $\Pic (\Gamma)$.

The \textbf{complete linear series} $|D|$ is the set of all effective divisors equivalent to $D$; it is naturally identified with the underlying set of a finite, connected polyhedral complex \cite{HMY09}.  Certain linear paths in $|D|$ are obtained by \textbf{firing subgraphs}.  The boundary of a closed subgraph $\Gamma'$ may be thought of as a divisor in which each boundary point $v_i$ appears with multiplicity equal to its out degree in $\Gamma$.  Roughly speaking, firing $\Gamma'$ for a small positive time $\epsilon$ means subtracting the boundary of $\Gamma'$ and adding the boundary of an $\epsilon$ neighborhood of $\Gamma'$.  Any two divisors in $|D|$ can be connected by a sequence of firings of subgraphs.

Since $|D|$ is the underlying set of a polyhedral complex which depends only on the class $[D]$, we may consider $\dim |D|$ as a function on $\Pic(\Gamma)$.  The complex $|D|$ is not necessarily pure dimensional, but we follow the usual convention that the dimension of a non pure complex is the maximum of the dimensions of its cells.  The following example of Josephine Yu shows that $\dim |D|$ is not upper semicontinuous on $\Pic_2$ of a genus 3 graph.

\begin{ex} \label{ex:yu}
Let $\Gamma$ be the graph of genus $3$ with four vertices labeled $v_0$, $v_1$, $w_0$, $w_1$, single edges joining $v_0$ to $v_1$ and $w_0$ to $w_1$, and pairs of edges joining $v_i$ to $w_i$, as shown.  All edges have length 1.
\smallskip
\begin{center}
\begin{picture}(200,100)
\put(60,10){\line(1,0){80}}
\put(60,10){\line(0,1){80}}
\put(60,90){\line(1,0){80}}
\put(140,10){\line(0,1){80}}
\qbezier(60,10)(25,50)(60,90)
\qbezier(140,10)(175,50)(140,90)
\put(60.5,10.5){\circle*{5}}
\put(60.5,89.5){\circle*{5}}
\put(139.5,10.5){\circle*{5}}
\put(139.5,89.5){\circle*{5}}
\put(110,10){\circle*{5}}
\put(103,17){$w_{\lambda}$}
\put(55,0){$w_0$}
\put(55,95){$v_0$}
\put(136,0){$w_1$}
\put(136,95){$v_1$}
\end{picture}
\end{center}
\smallskip
For real numbers $0 < \lambda < 1$, let $v_\lambda$ be the point in the segment $[v_0, v_1]$ at distance $\lambda$ from $v_0$.  Similarly, let $w_\lambda$ be the point in $[w_0, w_1]$ at distance $\lambda$ from $w_1$.  For $0 \leq t < 1$, we consider the divisor
\[
D_t = v_0 + w_{1-t}.
\]
When $t$ is positive, the complete linear series $R(D_t)$ is a 1-dimensional segment of length $t$, consisting of the divisors $v_\lambda + w_{1-\lambda}$ for $0 \leq \lambda \leq t$.  When $t$ is zero, the complete linear series $R(D_0)$ is the single point $v_0 + w_1$.  Since $[D_0]$ is the limit as $t$ goes to zero of the classes $[D_t]$, it follows that $\dim |D|$ is not upper semicontinuous on $\Pic_2(\Gamma)$.
\end{ex}

\subsection{Dhar's burning algorithm}
Given a fixed basepoint $v$ and an effective divisor $D$, Dhar's burning algorithm is a canonical and efficient method for finding a subgraph to fire so that the points in $D$ move toward $v$.  The algorithm terminates after finitely many steps, arriving at a divisor for which the corresponding subgraph is empty.  This divisor is called \textbf{$v$-reduced} and is characterized by the following properties.  First, the $v$-reduced divisor is effective away from $v$.  Next, among divisors equivalent to $D$ and effective away from $v$, it has the maximal possible multiplicity at $v$.   Finally, the set of distances to $v$ from the remaining points is lexicographically minimal.  Most importantly for our purposes, by the first two properties, if $D$ is $v$-reduced and does not contain $v$, then no effective divisor equivalent to $D$ contains $v$.  This burning algorithm for finding $v$-reduced divisors can be adapted to give an algorithm for determining ranks of divisors, as discussed in the following subsection.  See \cite{Dhar90, BakerNorine07, Luo11} for details.

\subsection{Ranks of divisors}

 Baker and Norine defined the rank of of an effective divisor $D$ as follows.

\begin{defn*}[\cite{BakerNorine07}]
The rank $r(D)$ is the largest integer $r$ such that, for every effective divisor $E$ of degree $r$ on $\Gamma$, there is a divisor $D'$ equivalent to $D$ such that $D' - E$ is effective.
\end{defn*}

\noindent If $D$ is not equivalent to an effective divisor then $r(D)$ is defined to be $-1$.  The rank $r(D)$ depends only on the divisor class $[D]$ in $\Pic (\Gamma)$.  The locus in $\Pic(\Gamma)$ of divisors of rank at least $r$ is closed, so the rank $r(D)$ is upper semicontinuous on $\Pic(\Gamma)$, unlike $\dim |D|$.

Luo's theory of rank determining sets shows that, in order to determine the rank of a divisor $D$, it is not necessary to test whether $D- E$ is equivalent to an effective divisor for all effective divisors $E$ of degree $r$; it suffices to check this for a finite and relatively small set of divisors.  We briefly recall the basic notions of this theory.  

For any subset $A \subset \Gamma$, the \textbf{$A$-rank} $r_A(D)$ is the largest integer $r$ such that, for every effective divisor $E$ of degree $r$ with support in $A$, there is a divisor $D'$ that is equivalent to $D$ such that $D' - E$ is effective.  Clearly $r_A(D) \geq r(D)$ for all $A$ and $D$.

\begin{defn*}[\cite{Luo11}] 
A subset $A \subset \Gamma$ is \textbf{rank determining} if $r_A(D) = r(D)$ for all divisors $D$ on $\Gamma$.
\end{defn*}

\noindent  In the same paper where he introduced this notion, Luo proved that every graph $\Gamma$ of genus $g$ has a rank determining set of size $g+1$, showed that rank determining sets are preserved by homeomorphisms, and gave necessary and sufficient topological criteria for a finite subset to be rank determining.

\section{Brill-Noether loci of metric graphs}  \label{sec:BNloci}

If $X$ is a smooth projective algebraic curve then, for nonnegative integers $r$ and $d$, the Brill-Noether locus $W^r_d(X) \subset \Pic_d(X)$ is the subset parametrizing divisor classes of degree $d$ whose complete linear series have dimension at least $r$.  This subset carries a natural scheme structure, given by its realization as a degeneracy locus of a natural map of vector bundles over $\Pic_d(X)$.  See \cite[Chapter~4]{ACGH} for details.

We define the Brill-Noether locus of the metric graph $\Gamma$ as follows.

\begin{defn}
For nonnegative integers $r$ and $d$, the \textbf{Brill-Noether locus}
\[
W^r_d(\Gamma) \subset \Pic_d(\Gamma)
\]
is the set of divisor classes of degree $d$ and rank at least $r$.
\end{defn}

This Brill-Noether locus carries a natural topology as a subspace of the torus torsor $\Pic_d(\Gamma)$.  We will show that $W^r_d(\Gamma)$ is, roughly speaking, the underlying set of a closed polyhedral complex.  To make this precise, we define polyhedral subsets of $\Pic_d(\Gamma)$ as follows.

Fix a basepoint $w$ in $\Gamma$.  Then the map taking a divisor class $[D]$ of degree $d$ to $[D - dw]$ identifies $\Pic_d(\Gamma)$ with $\Pic_0(\Gamma)$.  In particular, the choice of basepoint identifies $\Omega^*(\Gamma)$ with the universal cover of $\Pic_d(\Gamma)$.

\begin{defn}
A subset of $\Pic_d(\Gamma)$ is \textbf{polyhedral} if it is the image of a finite union of polytopes in $\Omega^*(\Gamma)$.
\end{defn}

\noindent  A different choice of basepoint will change the map from $\Omega^*(\Gamma)$ to $\Pic_d(\Gamma)$ by a translation, so the notion of polyhedral subsets of $\Pic_d(\Gamma)$ is well-defined, independent of the choice of basepoint.  Polyhedral subsets are always closed, and the union of any finite number of polyhedral subsets of $\Pic_d(\Gamma)$ is polyhedral.

\begin{ex}  \label{ex:AJ}
The restriction of the Abel-Jacobi map to each edge of $\Gamma$ factors through a linear map to $\Omega^*(\Gamma)$.  Therefore, the image $\Phi(\Gamma)$ is a polyhedral subset of $\Pic_1(\Gamma)$.
\end{ex}

\begin{lem}  \label{lem:int}
The intersection of any finite number of polyhedral subsets of $\Pic_d(\Gamma)$ is polyhedral.
\end{lem}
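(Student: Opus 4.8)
The plan is to work with the description of $\Pic_d(\Gamma)$ as the quotient of $\Omega^*(\Gamma)$ by the lattice $L = H_1(\Gamma,\ZZ)$; fix a basepoint and write $\pi \colon \Omega^*(\Gamma) \to \Pic_d(\Gamma)$ for the resulting universal covering map. By induction on the number of sets, it suffices to show that the intersection of two polyhedral subsets $S$ and $T$ is polyhedral. Since taking images commutes with unions, $S$ and $T$ are finite unions of sets of the form $\pi(P)$ with $P$ a polytope in $\Omega^*(\Gamma)$; moreover, the intersection of two finite unions is the union of the pairwise intersections, and a finite union of polyhedral subsets is polyhedral, so it is enough to treat $S = \pi(P)$ and $T = \pi(Q)$ for single polytopes $P$ and $Q$.

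For this reduced case, I would observe that a point lies in $\pi(P) \cap \pi(Q)$ if and only if it is $\pi(p)$ for some $p \in P$ with $p \in Q + \gamma$ for some $\gamma \in L$, which gives
\[
\pi(P) \cap \pi(Q) = \pi\!\left( \bigcup_{\gamma \in L} \bigl( P \cap (Q + \gamma) \bigr) \right).
\]
Because $P$ and $Q$ are bounded and $L$ is discrete in $\Omega^*(\Gamma)$, only finitely many translates $Q + \gamma$ meet $P$, so the union on the right is a finite union; and each $P \cap (Q + \gamma)$ is again a polytope, being an intersection of two polytopes. Hence $\pi(P) \cap \pi(Q)$ is the image of a finite union of polytopes, i.e.\ polyhedral, and running the reductions in reverse finishes the proof.

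The only step that is not purely formal is the finiteness of $\{\gamma \in L : (Q + \gamma) \cap P \neq \emptyset\}$, and this is immediate from the boundedness of $P$ and $Q$ and the discreteness of the lattice. I therefore expect no real obstacle; the content of the lemma is simply that the failure of injectivity of $\pi$ is controlled by finitely many lattice translates, after which intersection and image may be interchanged.
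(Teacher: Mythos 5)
Your proof is correct. It differs in packaging from the paper's: the paper fixes a basis of $H_1(\Gamma,\ZZ)$, takes the unit cube in $\Omega^*(\Gamma)$ as a fundamental domain, and reformulates the definition as ``a subset of $\Pic_d(\Gamma)$ is polyhedral if and only if its preimage in the unit cube is a finite union of polytopes,'' after which the lemma reduces to the fact that a finite intersection of finite unions of polytopes is a finite union of polytopes. You instead stay upstairs with the covering map and compute $\pi(P)\cap\pi(Q)$ directly as the image of $\bigcup_{\gamma\in L}\bigl(P\cap(Q+\gamma)\bigr)$, using boundedness of $P$ and $Q$ and discreteness of the lattice to see that only finitely many translates contribute. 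The two arguments rest on the same underlying finiteness phenomenon; the paper's preimage criterion quietly absorbs it (pulling the image of an arbitrary polytope back to the unit cube already uses that only finitely many lattice translates meet the cube), whereas you make it explicit. So your version is slightly longer but more self-contained, while the paper's is terser because the normalization to a fixed fundamental domain lets it quote a purely polytopal fact; both are complete once that finiteness is acknowledged.
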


\begin{proof}
If we fix a basis for $H_1(\Gamma, \ZZ)$ then $\Pic_d(\Gamma)$ is obtained from the unit cube in $\Omega^*(\Gamma)$ with respect to this basis by identifying opposite faces, and a subset of $\Pic_d(\Gamma)$ is polyhedral if and only if its preimage in the unit cube is a finite union of polytopes.  The lemma follows, since any finite intersection of finite unions of polytopes is again a finite union of polytopes.
\end{proof}

\begin{lem}  \label{lem:sums}
If $S$ and $S'$ are polyhedral subsets of $\Pic_d(\Gamma)$ and $\Pic_{d'}(\Gamma)$ then the sumset $S + S'$ is a polyhedral subset of $\Pic_{d + d'}(\Gamma)$.
\end{lem}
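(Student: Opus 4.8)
The plan is to reduce the statement to the elementary fact that a group homomorphism carries Minkowski sums of subsets to Minkowski sums, together with the fact that the Minkowski sum of two polytopes is a polytope. First I would fix a basepoint $w$ in $\Gamma$ and use it, as in the definition of polyhedral subsets, to identify each of $\Pic_d(\Gamma)$, $\Pic_{d'}(\Gamma)$, and $\Pic_{d+d'}(\Gamma)$ with $\Pic_0(\Gamma) = \Omega^*(\Gamma)/H_1(\Gamma,\ZZ)$, via $[D] \mapsto [D - dw]$ and so on. Under these identifications the addition map $\Pic_d(\Gamma) \times \Pic_{d'}(\Gamma) \to \Pic_{d+d'}(\Gamma)$ becomes the group operation on $\Pic_0(\Gamma)$, which is the quotient of the addition map $\Omega^*(\Gamma) \times \Omega^*(\Gamma) \to \Omega^*(\Gamma)$ by the homomorphism $\pi \colon \Omega^*(\Gamma) \to \Pic_0(\Gamma)$.

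Next I would write $S = \pi(P)$ and $S' = \pi(P')$, where $P = P_1 \cup \cdots \cup P_m$ and $P' = P'_1 \cup \cdots \cup P'_n$ are finite unions of polytopes in $\Omega^*(\Gamma)$. Since $\pi$ is a surjective group homomorphism, for any subsets $A, B \subset \Omega^*(\Gamma)$ one has $\pi(A) + \pi(B) = \pi(A + B)$, so
\[
S + S' = \pi(P) + \pi(P') = \pi(P + P') = \pi\!\left( \bigcup_{i,j} (P_i + P'_j) \right).
\]
Each Minkowski sum $P_i + P'_j$ of two polytopes is again a polytope (its vertices are among the pairwise sums of vertices of $P_i$ and $P'_j$, and it is clearly convex and bounded), so $S + S'$ is the image under $\pi$ of a finite union of polytopes, hence polyhedral by definition.

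The only point requiring care — and the closest thing to an obstacle — is checking that the basepoint identifications are mutually compatible with the addition maps, i.e.\ that the square relating $\Pic_d \times \Pic_{d'} \to \Pic_{d+d'}$ to the group law on $\Pic_0$ actually commutes, so that a lift of the sumset to $\Omega^*(\Gamma)$ is genuinely obtained by ordinary vector addition; this is a routine bookkeeping check with the formulas $[D]\mapsto[D-dw]$. Everything else is formal. One may also note, as a sanity check consistent with Lemma~\ref{lem:int}, that the conclusion is independent of the choice of basepoint, since changing $w$ alters the identifications only by translations, which preserve polyhedrality.
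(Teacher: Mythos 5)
Your proposal is correct and is essentially the paper's own argument: both write $S$ and $S'$ as images of finite unions of polytopes in $\Omega^*(\Gamma)$ and observe that $S+S'$ is the union of the images of the Minkowski sums $P_i + P'_j$, each of which is again a polytope. The extra bookkeeping about compatibility of the basepoint identifications is a reasonable clarification but does not change the route.
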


\begin{proof}
Say $S$ and $S'$ are the images of the finite unions of polytopes $P_1 \cup \cdots \cup P_k$ and $P'_1 \cup \cdots \cup P'_\ell$ in $\Omega^*(\Gamma)$, respectively.  Then $S + S'$ is the union of the images of the Minkowski sums $P_i + P'_j$.
\end{proof}

For a nonnegative integer $d$, let $\Eff_d(\Gamma) \subset \Pic_d(\Gamma)$ be the set of classes of effective divisors of degree $d$ on $\Gamma$.

\begin{prop} \label{prop:Eff}
The set of effective classes $\Eff_d (\Gamma)$ is a polyhedral subset of dimension $\min\{d,g\}$ in $\Pic_d(\Gamma)$.
\end{prop}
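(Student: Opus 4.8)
The plan is to realize $\Eff_d(\Gamma)$ as an iterated sumset, which gives polyhedrality for free from the lemmas already proved, and then to pin down its dimension by differentiating the Abel--Jacobi map. For polyhedrality, note that an effective divisor of degree $d$ is precisely a sum of $d$ (not necessarily distinct) points, so
\[
\Eff_d(\Gamma) \;=\; \underbrace{\Phi(\Gamma) + \cdots + \Phi(\Gamma)}_{d}
\]
inside $\Pic_d(\Gamma)$, where $\Phi(\Gamma) = \Eff_1(\Gamma)$ is polyhedral by Example~\ref{ex:AJ}. Applying Lemma~\ref{lem:sums} inductively then shows that $\Eff_d(\Gamma)$ is polyhedral. (The case $d = 0$ is the single point $0 \in \Pic_0(\Gamma)$, and is trivial.)

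For the dimension, the upper bound is immediate: $\Eff_d(\Gamma) \subseteq \Pic_d(\Gamma)$ forces $\dim \Eff_d(\Gamma) \le g$, and since $\Phi(\Gamma)$ is covered by the finitely many segments that are the images of the edges of $\Gamma$ in $\Omega^*(\Gamma)$ (Example~\ref{ex:AJ}), the sumset above is covered by Minkowski sums of $d$ such segments, each of dimension at most $d$, so $\dim \Eff_d(\Gamma) \le d$. For the lower bound I would set $m = \min\{d,g\}$, fix a spanning tree $T \subseteq \Gamma$, and use its $g$ complementary edges $e_1, \dots, e_g$, all of which are non-separating and hence not contracted by $\Phi$. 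For an oriented edge $e$, let $\delta_e \in \Omega^*(\Gamma)$ be the functional sending a harmonic $1$-form to its slope along $e$; this is exactly the velocity vector, in the universal cover $\Omega^*(\Gamma)$, of the lift of $\Phi$ as a point traverses $e$ at unit speed. The crux is to show that $\delta_{e_1}, \dots, \delta_{e_g}$ is a basis of $\Omega^*(\Gamma)$, equivalently that $\omega \mapsto (\delta_{e_1}(\omega), \dots, \delta_{e_g}(\omega))$ is injective on harmonic $1$-forms: a harmonic form with zero slope on every non-tree edge is supported on $T$ and so vanishes identically, as one sees by repeatedly pruning a leaf of $T$ and applying the balancing condition at the leaf vertex. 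Granting this, pick an interior point $p_i$ on $e_i$ for $i \le m$, together with arbitrary fixed points $q_1, \dots, q_{d-m}$ when $d > m$; lifting to $\Omega^*(\Gamma)$, the map $(x_1, \dots, x_m) \mapsto [x_1 + \cdots + x_m + q_1 + \cdots + q_{d-m}]$, with each $x_i$ varying in a short sub-segment of $e_i$ around $p_i$, is the affine-linear map $(t_1, \dots, t_m) \mapsto \sum_i t_i\, \delta_{e_i} + \mathrm{const}$, which is injective since the $\delta_{e_i}$ are linearly independent. Taking the sub-segments short enough that this box maps homeomorphically into the torus $\Pic_d(\Gamma)$ produces an $m$-dimensional polyhedral subset of $\Eff_d(\Gamma)$, whence $\dim \Eff_d(\Gamma) \ge m$.

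I expect the main obstacle to be the basis claim for $\{\delta_e : e \notin T\}$ together with its identification as the derivative of the Abel--Jacobi lift: this is precisely what converts $m$ algebraically independent directions into $m$ honest dimensions in $\Eff_d(\Gamma)$, and it is where the constraint $m \le g$ is used, via the existence of $m$ non-tree edges. The remaining ingredients --- the sumset bookkeeping, the upper bound, and the descent of the local lift to a genuine injection into $\Pic_d(\Gamma)$, handled by shrinking the sub-segments below the injectivity radius of the covering $\Omega^*(\Gamma) \to \Pic_d(\Gamma)$ --- are routine.
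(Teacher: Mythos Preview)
Your proof is correct. Polyhedrality and the upper bound match the paper's argument exactly. For the lower bound, however, you take a genuinely different route: you differentiate the Abel--Jacobi map along $m$ edges complementary to a spanning tree and explicitly exhibit an $m$-dimensional box inside $\Eff_d(\Gamma)$. The paper instead argues by subadditivity of sumset dimensions together with the fact (from tropical Riemann--Roch) that $\Eff_g(\Gamma) = \Pic_g(\Gamma)$: since $\Eff_g = \Eff_d + \Eff_{g-d}$ has dimension $g$ while $\dim \Eff_d \le d$ and $\dim \Eff_{g-d} \le g-d$, equality is forced. The paper's argument is shorter and avoids any local analysis of $\Phi$, but it imports the surjectivity $\Eff_g = \Pic_g$ as a black box; your argument is more hands-on and entirely self-contained, needing only the elementary fact that a harmonic form supported on a tree vanishes.
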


\begin{proof}
The fact that $\Eff_d$ is polyhedral of dimension at most $d$ follows from Example~\ref{ex:AJ} and Lemma~\ref{lem:sums}, since $\Eff_d(\Gamma)$ is the sum of $d$ copies of the 1-dimensional polyhedral image of the Abel-Jacobi map.  Since dimensions of sumsets are subadditive and $\Eff_g(\Gamma)$ is the full torus $\Pic_g(\Gamma)$, the dimension of $\Eff_d(\Gamma)$ must be equal to $d$ for $1 \leq d \leq g$.  
\end{proof}

\begin{prop}  \label{prop:polyhedral}
The Brill-Noether locus $W^r_d(\Gamma)$ is a polyhedral subset of $\Pic_d(\Gamma)$.
\end{prop}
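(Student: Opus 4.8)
The plan is to use a finite rank determining set to rewrite $W^r_d(\Gamma)$ as a finite intersection of translates of $\Eff_{d-r}(\Gamma)$, and then to quote the polyhedrality results already in hand.

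First I would dispense with the degenerate ranges. A divisor of rank at least $r \ge 0$ is equivalent to an effective divisor, and it remains so after subtracting any effective divisor of degree $r$; hence $W^r_d(\Gamma)$ is empty unless $d \ge r \ge 0$, and $\emptyset$ is polyhedral. So we may assume $d \ge r \ge 0$.

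Next, fix a finite rank determining set $A = \{p_1, \dots, p_n\} \subset \Gamma$, which exists by Luo's theorem (indeed of size $g+1$). The point of taking $A$ finite is that there are only finitely many effective divisors $E_1, \dots, E_N$ of degree $r$ with support in $A$. Unwinding the definition of the $A$-rank: a degree $d$ divisor $D$ satisfies $r_A(D) \ge r$ if and only if, for each $j$, the divisor $D - E_j$ is equivalent to an effective divisor, that is $[D] - [E_j] \in \Eff_{d-r}(\Gamma)$, that is $[D] \in \Eff_{d-r}(\Gamma) + [E_j]$. Since $A$ is rank determining, $r(D) = r_A(D)$, and therefore
\[
W^r_d(\Gamma) = \bigcap_{j=1}^{N} \bigl( \Eff_{d-r}(\Gamma) + [E_j] \bigr).
\]

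It then remains to check that the right-hand side is polyhedral. By Proposition~\ref{prop:Eff}, $\Eff_{d-r}(\Gamma)$ is a polyhedral subset of $\Pic_{d-r}(\Gamma)$, and a single class $[E_j]$ is trivially a polyhedral subset of $\Pic_r(\Gamma)$, so Lemma~\ref{lem:sums} shows each $\Eff_{d-r}(\Gamma) + [E_j]$ is a polyhedral subset of $\Pic_d(\Gamma)$; Lemma~\ref{lem:int} then finishes the argument, since a finite intersection of polyhedral subsets is polyhedral. The only genuine ingredient beyond formal manipulation is the existence of a \emph{finite} rank determining set: this is exactly what makes the intersection finite, and it is the step that would fail if one worked directly from the Baker--Norine definition of rank, since then $W^r_d(\Gamma)$ would a priori be an infinite intersection of translates of $\Eff_{d-r}(\Gamma)$.
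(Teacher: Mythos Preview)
Your proof is correct and follows essentially the same route as the paper: fix a finite rank determining set $A$, write $W^r_d(\Gamma)$ as the finite intersection of the translates $\Eff_{d-r}(\Gamma)+[E_j]$ over the effective degree-$r$ divisors $E_j$ supported on $A$, and conclude via Proposition~\ref{prop:Eff} and Lemma~\ref{lem:int}. Your version is slightly more explicit in handling the degenerate range $d<r$ and in invoking Lemma~\ref{lem:sums} for the translates, but the argument is the same.
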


\begin{proof}
Fix a finite rank determining set $A$ for $\Gamma$.  Then there are finitely many effective divisors $E_1, \ldots, E_k$ of degree $r$ with support contained in $A$, and $W^r_d(\Gamma)$ is the intersection of the images of the maps
\[
\varphi_i : \Eff_{d-r}(\Gamma) \rightarrow \Pic_d(\Gamma)
\]
taking the class of an effective divisor $D$ to $[D + E_i]$.  The proposition then follows from Lemma~\ref{lem:int} and Proposition~\ref{prop:Eff}; the image of each $\varphi_i$ is polyhedral, since it is a translation of $\Eff_{d-r}(\Gamma)$.  Then $W^r_d(\Gamma)$ is the intersection of these finitely many polyhedral subsets, and hence polyhedral by Lemma~\ref{lem:int}.
\end{proof}

\begin{rem}
Like the complete linear series of a divisor on a metric graph, the Brill-Noether locus $W^r_d(\Gamma)$ is polyhedral but not necessarily pure dimensional, so $\dim W^r_d(\Gamma)$ refers to the maximum of the dimensions of its polyhedral cells.  It is also worth noting that $|D|$ is a contractible complex in a vector space, while $W^r_d(\Gamma)$ often has nontrivial topology and lives in the torus $\Pic_d(\Gamma)$.
\end{rem}




\section{Loops of loops}

Let $\Gamma$ be a loop of loops of genus $g$, as described in the introduction.   So $\Gamma$ is a trivalent metric graph whose vertices are labeled $v_1, \ldots, v_{g-1}, w_1, \ldots w_{g-1}$ with a single edge of length $\ell_i$ joining $v_i$ to $w_i$, two edges joining $w_i$ to $v_{i+1}$ for $1 \leq i \leq g-2$ and two edges joining $w_{g-1}$ to $v_1$.  The case $g = 4$ is pictured here.

\begin{center}
\begin{picture}(200,165)
\put(60,20){\line(1,0){80}}
\put(12,80){\line(3,4){48}}
\put(188,80){\line(-3,4){48}}
\qbezier(60,144)(100,174)(140,144)
\qbezier(60,144)(100,114)(140,144)
\qbezier(12,80)(11,30)(60,20)
\qbezier(12,80)(61,70)(60,20)
\qbezier(188,80)(189,30)(140,20)
\qbezier(188,80)(139,70)(140,20)
\put(60,20){\circle*{5}}
\put(140,20){\circle*{5}}
\put(12,80){\circle*{5}}
\put(60,144){\circle*{5}}
\put(188,80){\circle*{5}}
\put(140,144){\circle*{5}}
\put(-2,82){$v_1$}
\put(193,82){$w_2$}
\put(43,147){$w_1$}
\put(145,147){$v_2$}
\put(53,9){$w_3$}
\put(138,9){$v_3$}
\put(98,27){$\ell_3$}
\put(43,103){$\ell_1$}
\put(151,103){$\ell_2$}
\end{picture}
\end{center}

We study ranks of divisors on $\Gamma$ using Luo's theory of rank determining sets.  This theory builds on Dhar's burning algorithm \cite{Dhar90} and the properties of $v$-reduced divisors developed in \cite[Section~3]{BakerNorine07}.  See also \cite[Section~2]{Luo11} for details on these basic notions, which we will use freely, without further mention.  

Recall that Luo defines an open subset of a metric graph to be \textbf{special} if it is connected and every connected component of its complement has a vertex with out degree at least two.  It follows that the closure of a special open set is a connected subgraph of positive genus. A subset $A \subset \Gamma$ is rank determining if there are no special open sets in the complement of $A$; see Definition~3.2 and Theorem~3.8 in \cite{Luo11}.

\begin{lem}
The set $A = \{ v_1, \ldots, v_{g-1}, w_1, \ldots, w_{g-1} \}$ is rank determining in $\Gamma$.
\end{lem}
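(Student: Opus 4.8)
The plan is to apply Luo's topological criterion recalled just above the statement: $A$ is rank determining if and only if its complement $\Gamma \setminus A$ contains no special open set. So I would assume, for contradiction, that $U \subseteq \Gamma \setminus A$ is special, meaning $U$ is connected and every connected component of $\Gamma \setminus U$ has a vertex of out degree at least two.

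First I would describe the connected components of $\Gamma \setminus A$. Removing the $2g-2$ trivalent vertices from the loop of loops disconnects it into the interiors of its edges: for each $i$ there is the open segment $(v_i, w_i)$ from the single edge of length $\ell_i$, and for each $i$ there are two open segments joining $w_i$ to $v_{i+1}$ (indices cyclically, so $w_{g-1}$ to $v_1$). Each such open segment is an open interval, hence the connected set $U$ must be contained in a single one of these open edges; say $U \subseteq e^\circ$ for one edge $e$ with endpoints $x, y \in A$.

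Next I would examine $\Gamma \setminus U$ for such a $U$. Since $U$ is a subinterval of $e^\circ$, its complement is either connected (if $U$ is not all of $e^\circ$, or if removing $e^\circ$ doesn't disconnect $\Gamma$) or has one or two components, but in every case each component contains at least one of the trivalent vertices $x$ or $y$, which lies in the interior of $\Gamma \setminus U$ — indeed, since $x$ and $y$ are not in $\overline{U}$ when $U$ is a proper subinterval, or when $U = e^\circ$ the points $x,y$ still belong to $\Gamma \setminus U$ and lie on the other edges meeting them. The key point is to check the out degree condition fails: a vertex $v$ of a component $C$ of $\Gamma \setminus U$ has out degree at least two only if at least two edge-directions at $v$ leave $C$, i.e. point into $U$. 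But $U$ lies in a single edge $e$, so at most one direction at any vertex points toward $U$ — in fact only the endpoints $x$ and $y$ of $e$ have a direction pointing into $U$ at all, and each has exactly one. Hence no component of $\Gamma \setminus U$ has a vertex of out degree $\geq 2$, so $U$ is not special, a contradiction.

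The only subtlety — and the step I expect to need the most care — is the boundary bookkeeping: precisely identifying which points of $\Gamma$ lie in $\Gamma \setminus U$ versus in $\overline{U}$, and confirming that the relevant trivalent vertices genuinely sit in the components of the complement with the claimed out degrees, including the degenerate cases where $U$ equals an entire open edge or where $e$ is one of the two parallel edges between consecutive $w_i$ and $v_{i+1}$ (so that removing $e^\circ$ does not disconnect $\Gamma$ at all, and $\Gamma \setminus U$ is connected with the single "escape direction" at each of $x$ and $y$). In all of these cases the out degree count is at most one, so no special open set exists and $A$ is rank determining.
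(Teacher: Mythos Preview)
Your argument is correct and follows the same overall strategy as the paper: apply Luo's criterion by showing that any connected open $U \subseteq \Gamma \setminus A$ lies inside a single open edge, hence cannot be special. The paper finishes this off slightly differently, invoking the consequence (stated just before the lemma) that the closure of a special open set must have positive genus; since the closure of any connected open subset of $\Gamma \setminus A$ is a segment, hence a tree of genus zero, no such $U$ is special. Your direct out-degree check is an equally valid (and more elementary) way to reach the same conclusion.

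One small imprecision: when $U$ is a proper open subinterval $(a,b)$ of $e^\circ$, the points of $\Gamma \setminus U$ with a direction into $U$ are the boundary points $a$ and $b$ of $U$, not the edge endpoints $x,y \in A$ as you write; in that case $x$ and $y$ have out degree zero. This does not affect your conclusion, since in every case each boundary point of $U$ still has exactly one outgoing direction, so no point of $\Gamma \setminus U$ has out degree at least two.
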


\begin{proof}
The closure of any connected component of $\Gamma \smallsetminus A$ is a tree, so the closure of any connected open subset of $\Gamma \smallsetminus A$ has genus zero.  It follows that there are no special open sets in the complement of $A$.
\end{proof}

To prove Theorem~\ref{thm:minimal} we must show that no proper subset of $A$ is rank determining.  The following example gives an explicit divisor on a loop of loops of genus 4 whose rank is not determined by $A \smallsetminus \{w_3\}$.

\begin{ex}
Suppose $g = 4$, the lengths $\ell_1$ and $\ell_2$ are both 1, and $\ell_3$ is 2.  Let $D = v_1 + w_2 + v_3$.  Firing the genus 2 subgraph bounded by $v_1$ and $w_2$ shows that $D$ is equivalent to $w_1 + v_2 + v_3$.  Since $D$ is equivalent to an effective divisor containing any point of $A' = A \smallsetminus \{w_3\}$, we have
\[
r_{A'}(D) \geq 1.
\]
However, $D$ is also equivalent to $D' = v_1 + v_2 + w$, where $w$ is the midpoint of the edge $[v_3, w_3]$, and it is straightforward to check by Dhar's burning algorithm that $D'$ is $w_3$-reduced.  Since $D'$ does not contain $w_3$, it follows that no effective divisor equivalent to $D$ contains $w_3$.  Therefore $r(D)= 0$, and $A'$ is not rank determining.
\end{ex}

Since rank determining sets are preserved by homeomorphisms \cite[Theorem~1.10]{Luo11}, and homeomorphisms of $\Gamma$ act transitively on $A$, it follows that no proper subset of $A$ is rank determining for $g = 4$.  We now prove the general case.

\begin{proof}[Proof of Theorem~\ref{thm:minimal}]
To show that the rank determining set $A$ is minimal, it will suffice to show that, for each point $v$ of $A$ there is a special open set in $\Gamma$ whose intersection with $A$ is exactly $\{v\}$.  See \cite[Proposition~3.26]{Luo11}.  Since homeomorphisms of $\Gamma$ act transitively on $A$, it will suffice to consider $v = v_1$.  Let $U$ be the connected open neighborhood of $v_1$ bounded by $w_1$ and $w_{g-1}$.  The complement of $U$ is connected and $w_1$ has outdegree 2, so $U$ is special.  Since the intersection of $U$ with $A$ is exactly $\{v\}$, the theorem follows.
\end{proof}

We now return to the case $g = 4$ and show that there is an open set of loops of loops $\Gamma$ such that the Brill-Noether locus $W^1_3(\Gamma)$ has positive dimension.

\begin{proof}[Proof of Theorem~\ref{thm:open}]
Let $\Gamma$ be a loop of loops of genus 4.  Suppose $\ell_1 > \ell_2 > \ell_3$ and $\ell_2 + \ell_3 > \ell_1$.  The set of all such graphs is open in the moduli space of metric graphs without separating edges.  Therefore, to prove the theorem it will suffice to show that $\dim W^1_3(\Gamma) \geq 1$.

Let $D$ be a divisor of the form $v_1 + w_3 + w$, where $w$ is in the interval $[v_2, w_2]$ at distance at least $\ell_1 - \ell_3$ from $v_2$.  We claim that $D$ has rank at least 1.  The theorem follows from this claim, the set of classes of all such divisors in $\Pic_3(\Gamma)$ is an embedded interval.  

It remains to show that $D$ has rank at least 1.  Firing the genus 2 subgraph of $\Gamma$ bounded by $v_1$ and $w$ shows that $D$ is equivalent to $v' + v_2 + w_3$ with $v'$ in the interval $[v_1, w_1]$ at distance $\ell_1 - d(v_2, w)$ from $v_1$.  Similarly, firing the loop bounded by $v_1$ and $w_3$ shows that $D$ is equivalent to
\[
D' = v + w + v_3,
\]
where $v$ is the point in $[v_1, w_1]$ at distance $\ell_3$ from $v_1$.  

Now, starting from $D'$ and firing the genus 1 subgraph bounded by $v$ and $w$ shows that $D$ is equivalent to $v_3 + w_2 + v'$, with $v'$ in the segment $[v_1, w_1]$.  Similarly, starting from $D'$ and firing the genus 2 subgraph bounded by $v$ and $w$ shows that $D$ is equivalent to $w_1 + w' + v_3$, with $w'$ in the segment $[v_2, w_2]$.  Altogether, this shows that $D$ is linearly equivalent to effective divisors that contain each element of $A$.  Therefore $r_A(D) = 1$ and, since $A$ is rank determining, $r(D) = 1$, as required.
\end{proof}

We conclude this section by using limits of loops of loops of genus 4 to show that $\dim W^r_d$ is not upper semicontinuous on the moduli space of bridgeless metric graphs.

\begin{proof}[Proof of Theorem~\ref{thm:notsemicont}]
Let $\Gamma_0$ be the degenerate loop of loops with three vertices $v_1$, $v_2$, and $v_3$, where each pair of distinct vertices is joined by a pair of edges of length 1.
\begin{center}
\begin{picture}(200,120)(0,10)
\put(40,120){\circle*{5}}
\put(160,120){\circle*{5}}
\put(100,20){\circle*{5}}
\qbezier(40,120)(45,55)(100,20)
\qbezier(40,120)(95,85)(100,20)
\qbezier(100,20)(155,55)(160,120)
\qbezier(100,20)(105,85)(160,120)
\qbezier(40,120)(100,148)(160,120)
\qbezier(40,120)(100,92)(160,120)
\put(26,123){$v_1$}
\put(165,123){$v_2$}
\put(96,9){$v_3$}
\end{picture}
\end{center}

Fix positive real numbers $\ell_1$, $\ell_2$, and $\ell_3$ such that $\ell_1 > \ell_2 > \ell_3$ and $\ell_2 + \ell_3 > \ell_1$, as in the proof of Theorem~\ref{thm:open}.  Let $\Gamma_t$ be the loop of loops of genus 4 in which $[v_i, w_i]$ has length $t \cdot \ell_i$ and all other edges have length 1.  Then $\Gamma_0$ is the limit of $\Gamma_t$ as $t$ goes to zero.  In the proof of Theorem~\ref{thm:open} we showed that $W^1_3(\Gamma_t)$ has positive dimension for $t > 0$. 

We claim that $W^1_3(\Gamma_0)$ consists of the single rank 1 class $[v_1 + v_2 + v_3]$ and hence is zero dimensional.  Indeed, let $D$ be a divisor of degree 3 and rank 1 on $\Gamma$.  Replacing $D$ by an equivalent divisor, we may assume $D$ is $v_1$-reduced, so $D = v_1 + v + w$ for some points $v$ and $w$ in $\Gamma$.  Dhar's burning algorithm shows that, since $D$ is $v_1$-reduced, the points $v$ and $w$ cannot both be in the interior of the same edge.  Applying Dhar's algorithm again, from $v_2$ and $v_3$, shows that $D$ is $v_2$-reduced and $v_3$-reduced, as well.  Since $r(D) = 1$, by hypothesis, and the set $\{v_1, v_2, v_3\}$ is rank determining,  it follows that $D$ must contain $v_2$ and $v_3$.  Therefore, $[D] = [v_1 + v_2 + v_3]$, as claimed.

We have shown that $\dim W^1_3(\Gamma_t)$ is positive for all positive $t$ and $\dim W^1_d(\Gamma_0)$ is zero.  Therefore, since $\Gamma_0$ is the limit of $\Gamma_t$ as $t$ goes to zero, $\dim W^1_3$ is not upper semicontinuous on the moduli space of metric graphs.
\end{proof}

\section{Brill-Noether rank}

In this final section, we show that the ranks of Brill-Noether loci of metric graphs, as defined in the introduction, vary upper semicontinuously in families and are related to dimensions of algebraic Brill-Noether loci by a specialization inequality.

Let $G$ be a connected graph.  Label the vertices of $G$ by $v_1, \ldots, v_m$ and the edges by $e_1, \ldots, e_n$.  So the genus of a geometric realization is $g = n-m + 1$.  We consider
\[
\sigma = \RR_{\geq  0}^n
\]
as a parameter space for possibly degenerate metric realizations of $G$; the point $\ell = (\ell_1, \ldots, \ell_s)$ corresponds to the metric graph $\Gamma_\ell$ in which $e_i$ has length $\ell_i$.  If $\ell_i = 0$, this produces a degenerate realization in which $e_i$ is contracted.  

Over $\sigma$, there is a universal family $\Gamma_\sigma$ of possibly degenerate metric realizations of $G$, obtained by gluing the cones
\[
\gamma_i = \{(\ell_1, \ldots, \ell_n, t) \in \RR_{\geq 0}^{n + 1} \ | \ 0 \leq t \leq \ell_i \}.
\]
The gluing depends on the choice of an orientation for each edge, but the resulting metric space is independent of all choices, as is the natural projection to $\sigma$ obtained by forgetting the last coordinate.  The fiber over $\ell$ is the metric graph $\Gamma_\ell$, and the intersection of this fiber with $\gamma_i$ is the edge $e_i$ of length $\ell_i$.

We now describe the subspace of $\sigma$ that parametrizes possibly degenerate realizations of $G$ that have genus $g$.  For each subset $I \subset \{ 1, \ldots, n \}$, let $G_I$ be the subgraph whose geometric realization is the union of the edges $e_i$ for $i \in I$, and let $g_I$ be the first Betti number of $G_I$.  Let $\tau_I$ be the face of $\sigma$ where $\ell_i = 0$ for $i \in I$.  If $\ell$ is in the relative interior of $\tau_I$ then $\Gamma_\ell$ has genus $g - g_I$.  In particular, the open subset
\[
\sigma^* = \sigma \smallsetminus \bigcup_{g_I > 0} \tau_I
\]
parametrizes possibly degenerate metric realizations of $G$ with genus $g$.  The moduli space of metric graphs of genus $g$ is the colimit of a natural diagram of such open cones for all combinatorial graphs of genus $g$.  Therefore, to show the Brill-Noether rank is upper semicontinuous it will suffice to prove this on $\sigma^*$.

We write $\Gamma^*$ for the preimage of $\sigma^*$ in $\Gamma_\sigma$.    The natural piecewise linear parame\-tri\-zation map $\sigma^* \times \Gamma_{(1, \ldots, 1)} \rightarrow \Gamma^*$ in which the edge $e_i$ is stretched uniformly by a factor of $\ell_i$ in the fiber over $\ell$ is a homotopy equivalence, as is the inclusion of any fiber $\Gamma_\ell \subset \Gamma^*$ for $\ell \in \sigma^*$.  The dual of the space of harmonic forms $\Omega^*(\Gamma_\ell)$ is naturally identified with $H_1(\Gamma_\ell, \RR)$ for each $\ell \in \sigma^*$, and hence with $H_1(\Gamma^*, \RR)$.  We fix the vertex $v_1$ as a basepoint and define the relative Jacobian as
\[
 \Pic_0 (\Gamma^*) = \sigma^*  \times \big( H_1(\Gamma^*, \RR) / H_1(\Gamma^*, \ZZ) \big) .
 \]
 The standard arguments in Abel-Jacobi theory for a single graph then produce a piecewise linear relative Abel-Jacobi map over $\sigma^*$
\[
\Phi: \Gamma^* \rightarrow  \Pic_0(\Gamma^*),
\]
compatible with projections to $\sigma^*$, whose base change to $\ell \in \sigma^*$ is the usual Abel-Jacobi map for $\Gamma_\ell$.  

We claim that the universal family of realizations $\pi:\Gamma^* \rightarrow \sigma^*$ is proper, in the topological sense, meaning that the preimage of any compact set in $\sigma^*$ is compact in $\Gamma^*$.  To see this, just note that if $C$ is any subset of $\sigma^*$ then $\pi^{-1}(C)$ is the continuous image of $C \times \Gamma_{(1,\ldots, 1)}$ under the natural piecewise linear parametrization map described above.  Therefore, if $C$ is compact then $\pi^{-1}(C)$ is a continuous image of the compact space $C \times \Gamma_{(1, \ldots, 1)}$, and hence is also compact.  Since proper maps are universally closed and $\Phi$ commutes with projection to $\sigma^*$, it follows that the image of the universal graph $\Phi(\Gamma^*)$ is closed in the universal Jacobian $\Pic_0(\Gamma^*)$.  We consider $\Pic_0(\Gamma^*)$ as a group object in the category of topological spaces over $\sigma^*$, so the set $\Eff_d(\Gamma^*)$ parametrizing graphs $\Gamma_\ell$ with the class of an effective divisor of degree $d$ is the sumset of $d$ copies of $\Phi(\Gamma^*)$, and hence is also closed.

We now show that the function taking a graph $\Gamma$ to $w^r_d(\Gamma)$ is upper semicontinuous.

\begin{proof}[Proof of Theorem~\ref{thm:semicont}]
With the fixed basepoint $v_1$, the torus torsor $\Pic_d(\Gamma_\ell)$ is identified with $\Pic_0(\Gamma_\ell)$ for all $\ell$, so we consider Brill-Noether loci inside the torus $\Pic_0$ instead of inside the torsor $\Pic_d$.  

First, we claim that the universal Brill-Noether locus $W^r_d(\Gamma^*) = \bigsqcup_\ell W^r_d(\Gamma_\ell)$ is closed in $\Pic_0(\Gamma^*)$.  To see this, let $v_i^*$ be the section of $\Gamma^*$ given by the vertex $v_i$ for $1 \leq i \leq m$.  Then, for any tuple of nonnegative integers $a = (a_1, \ldots, a_m)$ such that $a_1 + \cdots + a_m = r$, we have the closed subset
\[
S_a = a_1 v_1^* + \cdots + a_m v_m^* + \Eff_{d-r}(\Gamma^*)
\]
in $\Eff_d(\Gamma^*)$.  Since the vertex set $\{v_1, \ldots, v_m\}$ is rank determining on $\Gamma_\ell$ for all $\ell$, by \cite{Luo11}, the universal Brill-Noether locus over $\sigma^*$ is
\[
W^r_d(\Gamma^*) = \bigcap_a S_a,
\]
which is an intersection of closed sets and hence closed.

Now, consider the map
\[
\mu: \Eff_{d-r-\rho}(\Gamma^*) \times \Eff_{r+\rho}(\Gamma^*) \rightarrow \Pic_d(\Gamma^*),
\]
given by adding effective divisor classes.  We consider the closed set $\mu^{-1}(W^r_d(\Gamma^*))$ and its image $Z \subset \Eff_{r+\rho}(\Gamma^*)$ under projection to the second factor.  This closed set $Z$ parametrizes graphs $\Gamma_\ell$ with the class of an effective divisor of degree $(r + \rho)$ that is contained in an effective divisor of degree $d$ and rank at least $r$.  Now, by definition, $w^r_d(\Gamma_\ell)$ is at least $\rho$ if and only if $Z$ contains $\Eff_{r+ \rho}(\Gamma_\ell)$.  Therefore, we consider the complement
\[
U = \Eff_{r + \rho}(\Gamma^*) \smallsetminus Z,
\]
which is open in $\Eff_{r+\rho}(\Gamma^*)$ and parametrizes graphs $\Gamma_\ell$ with the class of an effective divisor of degree $(r + \rho)$ that is not contained in any effective divisor of degree $d$ and rank $r$.  It remains to show that the image of $U$ is open in $\sigma^*$.

We claim that the projection from $\Eff_{r + \rho}(\Gamma^*)$ to $\sigma^*$ is open.  Indeed, the projection $p_2$ from $(\Gamma_{(1, \ldots, 1)})^{r + \rho} \times \sigma^*$ to $\sigma^*$ factors through the natural parametrizing map onto $\Eff_{r + \rho}(\Gamma^*)$.  Since $p_2$ is an open mapping, it follows that the image of $U$ is open, which proves the claim. We have shown that the set of $\ell$ in $\sigma^*$ such that $w^r_d(\Gamma_\ell)$ is less than $\rho$ is open, for arbitrary $\rho$. Therefore $w^r_d$ is upper semicontinuous, as required.
\end{proof}

We now explore the relationship between Brill-Noether ranks of metric graphs and dimensions of Brill-Noether loci of algebraic curves.  Consider a smooth projective algebraic curve $X$ of genus $g$, and suppose the Brill-Noether locus $W^r_d(X)$ is nonempty.  Let $D$ be an effective divisor on $X$ whose class is in $W^r_d(X)$.  

Roughly speaking, this class being in $W^r_d(X)$ means that $D$ is a configuration of $d$ points on $X$ that can be moved in a family parametrized by the projective line to contain any configuration of $r$ points on $X$.  Now, suppose $[D]$ is contained in a positive dimensional component of $W^r_d(X)$ and $E$ is an effective divisor of degree $r$ on $X$.  Then $D$ is equivalent to some divisor $D'$ such that $D' - E$ is effective.  Moving $[D]$ in a family $[D_t]$ parametrized by a (nonrational) algebraic curve in $W^r_d(X)$ should lead to a family $D'_t$ of divisors of degree $d$ that contain $E$ and whose classes lie in $W^r_d(X)$, and the residual divisors $D'_t - E$ should be configurations of $d - r$ points that move in a family sweeping out all of $X$.  Since $E$ is arbitrary, this would mean that any effective divisor of degree $r + 1$ on $X$ is contained in an effective divisor whose complete linear series has dimension at least $r$.  The following proposition makes this rough idea precise, and extends it to the case where the Brill-Noether locus has dimension greater than 1.

\begin{prop} \label{prop:algrank}
Let $X$ be a smooth projective curve.  Suppose $W^r_d(X)$ is not empty, and let $E$ be an effective divisor of degree $r + \dim W^r_d(X)$ on $X$.  Then there is a divisor $D$ whose class is in $W^r_d(X)$ such that $D-E$ is effective.
\end{prop}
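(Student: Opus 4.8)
The plan is to turn the heuristic preceding the statement into a rigorous argument, carried out entirely inside the symmetric products $X^{(\bullet)}$ so that the ``moving families'' cannot degenerate. Write $n = \dim W^r_d(X)$ and fix an irreducible component $W_0$ of $W^r_d(X)$ with $\dim W_0 = n$ (such a component exists since $W^r_d(X)$ is closed). Given an effective divisor $E$ of degree $r+n$, choose an effective $E_0 \le E$ of degree $r$ and let $x_1,\dots,x_n$ be the remaining points of $E$, so $E = E_0 + x_1 + \cdots + x_n$. Since every class in $W_0$ has rank $\ge r$, for each $[L] \in W_0$ we have $h^0(L-E_0) \ge h^0(L) - r \ge 1$, so $[L-E_0]$ is effective. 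Letting $u\colon X^{(d-r)} \to \Pic_{d-r}(X)$ be the Abel--Jacobi map, this says $W_0 - [E_0] \subseteq u(X^{(d-r)})$, so $\mathcal F := u^{-1}(W_0 - [E_0])$ is a nonempty closed subvariety of $X^{(d-r)}$ mapping onto the $n$-dimensional $W_0 - [E_0]$; in particular $\dim \mathcal F \ge n$, and therefore $d - r \ge n$. Fix an irreducible component $\mathcal F_0$ of $\mathcal F$ with $\dim \mathcal F_0 \ge n$.

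The engine is a sweep-out lemma: if $Z \subseteq X^{(m)}$ is an irreducible closed subvariety with $m \ge 1$ and $\dim Z \ge 1$, then $\bigcup_{F \in Z} \operatorname{supp}(F) = X$. This follows because the union in question is the image in $X$ of the closed incidence variety $\{(F,x) \in Z \times X : x \in \operatorname{supp}(F)\}$, hence closed in $X$; if it were not all of $X$ it would be finite, which would force every $F \in Z$ to be supported on a fixed finite set and hence $Z$ to be a finite set of divisors, contradicting $\dim Z \ge 1$. Now I would peel off the points $x_j$ one at a time. Put $Z_0 = \mathcal F_0 \subseteq X^{(d-r)}$. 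Given an irreducible closed $Z_{j-1} \subseteq X^{(d-r-j+1)}$ of dimension $\ge n-j+1$ with $Z_{j-1} + x_1 + \cdots + x_{j-1} \subseteq \mathcal F_0$, note $d-r-j+1 \ge 1$ and, for $j \le n$, $\dim Z_{j-1} \ge 1$, so the lemma produces $F' \in Z_{j-1}$ with $x_j \in \operatorname{supp}(F')$; thus $Z_{j-1} \cap (x_j + X^{(d-r-j)})$ is a nonempty hypersurface section of the irreducible variety $Z_{j-1}$, so it has an irreducible component of dimension $\ge n - j$, and translating that component by $-x_j$ gives the next stage $Z_j \subseteq X^{(d-r-j)}$, still with $Z_j + x_1 + \cdots + x_j \subseteq \mathcal F_0$. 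After $n$ steps, $Z_n$ is nonempty; choosing $F \in Z_n$ and setting $D = F + x_1 + \cdots + x_n + E_0$, we get $[D] \in W_0 \subseteq W^r_d(X)$ (because $F + x_1 + \cdots + x_n \in \mathcal F_0 \subseteq u^{-1}(W_0 - [E_0])$) and $D \ge E_0 + x_1 + \cdots + x_n = E$, as required. When $n = 0$ there are no steps and this reduces to the trivial observation that a rank-$\ge r$ divisor class of degree $d$ is represented by an effective divisor dominating any prescribed effective divisor of degree $r$.

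The hard part is not any single computation but getting the bookkeeping of the iteration to close up: one must know that each residual family $Z_{j-1}$ is still genuinely positive-dimensional and not collapsed to a single divisor, and that the relevant symmetric products still have positive degree. Both issues are handled by the choice to work upstairs in $X^{(\bullet)}$: subtracting $x_j$ is an isomorphism onto a closed subvariety of a smaller symmetric product, so it cannot collapse dimension, and the bound $d \ge r + n$ needed to keep the degrees $d-r-j$ nonnegative is not an extra hypothesis but falls out of $\mathcal F \subseteq X^{(d-r)}$ having dimension at least $n$. (If one instead ran the argument with families of divisor classes in $\Pic$, a positive-dimensional family of classes could have a zero-dimensional ``support sweep'' because of base points, and the argument would fail.)
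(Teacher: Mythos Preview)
Your proof is correct and takes a genuinely different route from the paper's. The paper works in the ordered product $X^d$: it sets $S = \phi^{-1}(W^r_d(X))$ where $\phi\colon X^d \to \Pic_d(X)$ sends a tuple to the class of its sum, observes that $\dim S \ge r + n$ because each $\phi$-fiber over $W^r_d(X)$ surjects onto $X^r$, takes $k$ maximal so that projection to the first $k$ coordinates maps $S$ onto $X^k$, and argues by contradiction: if $k < r+n$ then every fiber over $X^k$ is positive-dimensional, and the $S_d$-symmetry of $S$ forces each remaining coordinate projection of such a fiber to have image all of $X$, contradicting maximality of $k$. Your argument instead separates the roles of rank and dimension: you absorb a degree-$r$ piece $E_0 \le E$ in one stroke via $h^0(L-E_0)\ge 1$, and then peel off the remaining $n$ points by an explicit descent through symmetric products, with the sweep-out lemma plus Krull's principal ideal theorem controlling each step. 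The paper's symmetry trick is shorter and treats all $r+n$ points uniformly, but the passage from ``the general fiber is positive-dimensional'' to ``every fiber is positive-dimensional'' implicitly leans on $S$ behaving like an irreducible variety. Your route is more hands-on but sidesteps that subtlety by fixing an irreducible component from the outset, and it makes the inequality $d \ge r+n$ emerge transparently from $\dim \mathcal F_0 \le d-r$ rather than appearing only implicitly.
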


\begin{proof}
Consider the subset $S \subset X^d$ consisting of tuples $(x_1, \ldots, x_d)$ such that the divisor class $[x_1 + \cdots + x_d]$ is in $W^r_d(X)$.  In other words, $S$ is the preimage of $W^r_d(X)$ under the natural map $\phi: X^d \rightarrow \Pic_d(X)$.  The fiber of this map over a divisor class $[D]$ is invariant under the action of the symmetric group on the $d$ factors, and the quotient is the complete linear series $|D|$.  If $[D]$ is in $W^r_d(X)$, then the fiber $\phi^{-1}([D])$ surjects onto $X^r$ under the projection to the first $r$ factors.  Therefore, $S$ has dimension at least $r + \dim W^r_d(X)$.

Choose $k$ as large as possible such that projection to the first $k$ factors maps $S$ surjectively onto $X^k$.  We must show that $k$ is at least $r + \dim W^r_d(X)$.

Suppose $k$ is less than $r + \dim W^r_d(X)$, so the general fiber of the projection $\pi: S \rightarrow X^k$ is positive dimensional.  Since $S$ is proper, the fiber dimension is upper semicontinuous, and hence every fiber is positive dimensional.  Let $x = (x_1, \ldots, x_k)$ be a point in $X^k$, and consider the projection of $\pi^{-1}(x)$ onto the $i$th coordinate of $X^d$, for $i > k$.  Each of these projections has the same image, by symmetry.  Therefore, the image must be 1-dimensional, and hence $\pi^{-1}(x)$ surjects on $X$.  Since this holds for all $x$ in $X^k$, it follows that $S$ surjects onto $X^{k+1}$, contradicting the choice of $k$.  We conclude that $k$ is at least $r + \dim W^r_d(X)$, and the proposition follows.
\end{proof}

\noindent The proposition gives further justification for the rough idea that the rank of the Brill-Noether locus of a metric graph is an avatar for the dimension of the Brill-Noether locus of an algebraic curve.  We now use the proposition to prove the specialization inequality stated in the introduction, which says that the Brill-Noether rank can only go up when specializing from curves to graphs.

\begin{proof}[Proof of Theorem~\ref{thm:specialization}]
Let $X$ be a smooth projective curve of genus $g$ over a discretely valued field with a regular semistable model whose special fiber has dual graph $\Gamma$, and suppose $W^r_d(X)$ is nonempty.  We must show that any effective divisor $E$ of degree $r + \dim W^r_d(X)$ on $\Gamma$ is contained in an effective divisor of degree $d$ and rank $r$.  First, we prove this in the case where $E$ is rational, i.e. supported at points a rational distance from the vertices of $\Gamma$, and then we prove the general case by rational approximation.

Let $E$ be an effective divisor of degree $r + \dim W^r_d(X)$ on $\Gamma$ that is rational.  Then $E$ is the specialization of an effective divisor $E'$ on $X$.  By Proposition~\ref{prop:algrank} there is an effective divisor $D'$ of degree $d$ and rank at least $r$ on $X$ that contains $E'$.  Then the specialization of $D'$ is an effective divisor of
degree $d$ and rank at least $r$ on $\Gamma$ that contains $E$.

Now, let $E$ be an arbitary, possibly nonrational, effective divisor of
degree $r + \dim W^r_d(X)$ on $\Gamma$.  Choose a sequence $\{E_1, E_2, \ldots \}$ of rational
divisors on $\Gamma$ that converges to $E$.  For each $E_i$ let $D_i$ be an
effective divisor of degree $d$ and rank at least $r$ that contains $E_i$.
Since the $d$th symmetric product of $\Gamma$ is compact, some subsequence
of $\{D_1, D_2, \ldots \}$ converges to a divisor $D$.  Now, since $W^r_d(\Gamma)$
is closed and contains $[D_i]$ for all $i$, the limit $D$ must have rank at
least $r$.  Then $D$ is an effective divisor of
degree $d$ and rank at least $r$ on $\Gamma$ that contains $E$, and the theorem follows.
\end{proof}

We conclude by showing that the Brill-Noether $w^1_3$ takes the expected value $\rho(4,1,3) = 0$ for a loop of loops of genus 4.

\begin{proof}[Proof of Theorem~\ref{thm:w13}]
Let $\Gamma$ be a loop of loops of genus 4.  We may assume that $[v_1, w_1]$ is the longest of the three single edges, i.e. that $\ell_1$ is greater than or equal to $\ell_2$ and $\ell_3$.  We claim that $v_1 + w_1$ is not contained in any effective divisor of degree 3 and rank at least 1.

Let $v_1 + w_1 + w$ be an effective divisor of degree 3 that contains $v_1 + w_1$.  The following case by case analysis shows that $D$ has rank zero by exhibiting, in each case, a vertex $v$ of $\Gamma$ such that the $v$-reduced divisor equivalent to $D$ does not contain $v$.

\bigskip

\noindent \emph{Case 1:}  Suppose $w$ is contained in either $[v_1, w_1]$ or one of the open segments $(w_1, v_2)$, $(w_2, v_3)$, or $(w_3, v_1)$.  Then $D$ is $v_2$-reduced, but does not contain $v_2$.

\bigskip

\noindent \emph{Case 2:}  Suppose $w$ is contained in the segment $[v_2, w_2]$.  Firing the genus 1 subgraph bounded by $v_1$ and $w$ shows that $D$ is equivalent to $D' = v_1 + w' + w_2$, where $v'$ is in the segment $[v_1, w_1]$.  Then $D'$ is $v_3$-reduced but does not contain $v_3$.

\bigskip

\noindent \emph{Case 3:}  Suppose $w$ is contained in the segment $[v_3, w_3]$.  Firing the genus 1 subgraph bounded by $v_1$ and $w$ shows that $D$ is equivalent to $D'' = v' + w_1 + v_3$, where $v'$ is in the segment $[v_1, w_1]$.  Then $D''$ is $v_2$-reduced but does not contain $v_2$.
\end{proof}

\bibliographystyle{amsalpha}
\providecommand{\bysame}{\leavevmode\hbox to3em{\hrulefill}\thinspace}
\providecommand{\MR}{\relax\ifhmode\unskip\space\fi MR }
\providecommand{\MRhref}[2]{%
  \href{http://www.ams.org/mathscinet-getitem?mr=#1}{#2}
}
\providecommand{\href}[2]{#2}

\end{document}